\newtheorem{theorem}{Theorem}[section]
\theoremstyle{plain}
\newtheorem{lemma}[theorem]{Lemma}
\newtheorem{proposition}[theorem]{Proposition}
\newtheorem{remark}[theorem]{Remark}
\numberwithin{equation}{section}
\title[]{Optimal harvesting for a logistic model with grazing}
\author{Mohan Mallick}
\address[Mohan Mallick]{VNIT Nagpur, India-440010} \email{mohan.math09@gmail.com, mohanmallick@mth.vnit.ac.in}
\author{Ardra A}
\address[Ardra A]{Department of Mathematics, IIT Palakkad, Kerala-678557, India} \email{ardra.math@gmail.com, 211814001@smail.iitpkd.ac.in}
\author{Sarath Sasi}
\address[Sarath Sasi]{Department of Mathematics, IIT Palakkad, Kerala-678557, India} \email{sarath@iitpkd.ac.in}
\date{}
\begin{document}
\maketitle
\begin{abstract}
	We consider  semi-linear elliptic equations of the following form:
\begin{equation*}
\left\{
\begin{aligned}
            -\Delta u &= \lambda[u-\dfrac{u^2}{K}-c \dfrac{u^2}{1+u^2}-h(x) u]=:\lambda f_h(u), \quad && x \in \Omega,\\
            \frac{\partial u}{\partial \eta}&+qu = 0, \quad && x\in\partial\Omega,
\end{aligned}
\right.
\end{equation*}
 where, $h\in U=\{h\in L^2(\Omega): 0\leq h(x)\leq H\}.$ We prove the existence and uniqueness of the positive solution for large $\lambda.$ Further, we establish the existence of an optimal control $h\in U$ that maximizes the functional
$J(h)=\int_{\Omega}h(x)u_h(x)~\rm{d}x-\int_{\Omega}(B_1+B_2 h(x))h(x)~\rm{d}x$
over $U$, where $u_h$ is the unique positive solution of the above problem associated with $h$,  $B_1>0$ is the cost per unit effort when the level of effort is low and $B_2>0$ represents the rate at which the cost rises as more labor is employed. Finally, we provide a unique optimality system.
\end{abstract}
\noindent{\bf Mathematics Subject Classification (2010):} { Primary {49J20, 49K20, 92D25, 92D40}; Secondary 35J05, 35P05}.\\
 {\bf Keywor~\rm{d}s:}
 Optimal control, Spatial ecology,  Elliptic equations, Existence and uniqueness, Grazing and harvesting
\section{Introduction}\label{s1}
\noindent Let $\Omega\subset \mathbb{R}^N$ be a bounded domain with $C^2$ boundary. We study an optimal control problem for a nonlinear elliptic equation of the form
\begin{equation}
\label{main}
            \left\{
            \begin{split}
            -\Delta u &= \lambda[u-\dfrac{u^2}{K}-c \dfrac{u^2}{1+u^2}-h(x) u]=:\lambda f_h(u), ~~&&\rm{in}~\Omega,\\
            \frac{\partial u}{\partial \eta}&+qu = 0, ~~~\rm{on} ~\partial\Omega,
            \end{split} \right.
\end{equation}
where $\frac{1}{\lambda}$ is the diffusion coefficient, $K$, $c$ and $q$ are positive constants. Here $u$ is
the population density and $u-\dfrac{u^2}{K}$ represents logistics growth and the control $h$ represents the harvesting effort.
This model describes the grazing of a fixed number of grazers on a
logistically growing species (see \cite{May}-\cite{NM}). The rate of grazing is
given by $\dfrac{c u^2}{1+u^2}$ and the grazing population is assumed to be a constant. The model has also been used to describe the effect
of natural predators on fish populations. In such cases the term $\dfrac{cu^2}
{1+u^2}$ corresponds to natural predation. For more details see  \cite{May}, \cite{JS}, \cite{SH} and \cite{VS}. Robin boundary condition, where the flux at the boundary is proportional to the fish stock density, describes a scenario more favorable to the fish stock especially compared to the Dirichlet boundary condition in which the region surrounding our spatial domain is assumed to be lethal.
 
 \noindent The optimal harvesting problem for a population described by logistic growth was studied by Ca\~nada et al. in \cite{Canada98}. They studied \eqref{main} with  Dirichlet boundary condition taking $c=0$ and derived an optimality system that maximizes the payoff functional
$$ J(h)=\int_{\Omega}h(x)u_h(x)~\rm{d}x-\int_{\Omega} (h(x))^2~\rm{d}x,$$
where $u_h$ is the positive solution of \eqref{main}. The same logistic growth model  was studied by Ding and Lenhart in \cite{D-Lenhart2009}, where they discuss the existence and characterization of a $h$ which maximizes the functional
\begin{equation}
\label{obj}
J(h)=\int_{\Omega}h(x)u_h(x)~\rm{d}x-\int_{\Omega}(B_1+B_2 h(x))h(x)~\rm{d}x,
\end{equation}
where  $B_1>0$ is the cost per unit effort when the level of effort is low and $B_2>0$ represents the rate at which
the cost rises as more labor is employed. In this article we study the optimal control problem for the `logistic growth with grazing' model \eqref{main} with the objective of maximizing the payoff functional \eqref{obj}. The set of admissible controls is defined by
$$U=\{h(x)\in L^2(\Omega): 0\leq h(x)\leq H\},$$
where $0<H<1$ is a constant. We pose the optimal control problem in the setting where \eqref{main} has a unique positive solution.

The existence of an optimality system can be proved using the standard arguments given in  \cite{Canada98} and  \cite{D-Lenhart2009}. However, derivation of the optimality system and uniqueness becomes very challenging because of the model's nonlinear grazing term and the Robin boundary condition. Certain monotonicity arguments in \cite{Canada98}, which were also used in \cite{D-Lenhart2009}, become unfeasible because of the grazing term. The Robin boundary condition and the new solution space introduce new challenges in obtaining certain estimates. \\

\noindent First, we state a result that gives the existence and uniqueness of positive solutions of \eqref{main}.
\begin{theorem}\label{T1}
For $c<2(1-H)$, there exists a $K^*>0$ such that for $K>K^*$ \eqref{main} has a unique positive solution for large $\lambda$.
\end{theorem}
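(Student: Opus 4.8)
The plan is to exploit the factorization $f_h(u)=u\,g_h(u)$ with $g_h(u)=1-h(x)-\frac{u}{K}-\frac{cu}{1+u^2}$, treat the problem as a singular perturbation in $\lambda$, and let the geometry of the positive roots of $g_h(\cdot)=0$ drive both existence and uniqueness. The engine of the whole argument is a root analysis. Setting $r(u)=\frac{u}{K}+\frac{cu}{1+u^2}$, a positive equilibrium solves $r(u)=1-h(x)$, and a direct computation gives $r'(u)=\frac1K+c\,\frac{1-u^2}{(1+u^2)^2}$, which for $K$ large has exactly two zeros: a local maximum of $r$ at $u_-\approx 1$ and a local minimum at $u_+\approx\sqrt{cK}$. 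Since $\sup_{u>0}\frac{cu}{1+u^2}=\frac c2$, the local maximum satisfies $r(u_-)\le \frac c2+\frac1K$. The hypothesis $c<2(1-H)$ is precisely the statement $\frac c2<1-H\le 1-h(x)$, so for all $K$ larger than some $K^*$ the level $1-h(x)$ sits strictly above the local maximum of $r$. Hence $r(u)=1-h(x)$ has a \emph{unique} positive root $w_h(x)$, lying on the last increasing branch where $r'>0$; equivalently $g_h'(w_h)=-r'(w_h)<0$ and $f_h'(w_h)=w_h g_h'(w_h)<0$. Thus the relevant kinetic equilibrium is unique and strictly stable, with $f_h'(w_h)\approx-(1-h)\le-(1-H)<0$ of order one.

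For existence I would use sub- and supersolutions. A constant $\bar u$ slightly larger than $K$ is a supersolution, since $g_h(\bar u)<0$ for every admissible $h$ and $\partial_\eta\bar u+q\bar u=q\bar u>0$. For the subsolution I take $\underline u=\varepsilon\phi_1$, where $\phi_1>0$ is the principal eigenfunction of $-\Delta$ under the Robin condition with eigenvalue $\mu_1$; because $f_h(u)/u\to 1-h(x)\ge 1-H>0$ as $u\to0^+$, one checks $-\Delta\underline u=\mu_1\varepsilon\phi_1\le\lambda f_h(\varepsilon\phi_1)$ once $\lambda>\mu_1/(1-H)$ and $\varepsilon$ is small, with the boundary condition holding with equality. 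As $\underline u\le\bar u$, a solution exists in $[\underline u,\bar u]$, and monotone iteration from the two ends produces a minimal solution $u_*$ and a maximal solution $u^*$ with $u_*\le u^*$.

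Uniqueness is where I expect the real fight, and it is the obstacle the paper flags. The natural tool, the Brezis--Oswald/Díaz--Saa principle, requires $u\mapsto f_h(u)/u$ to be decreasing over the full range of the solutions; but $r$ is non-monotone, so $f_h/u$ \emph{increases} on the grazing dip $(u_-,u_+)\approx(1,\sqrt{cK})$, and the Robin condition forces a boundary layer in which any solution must sweep through this interval. To circumvent the loss of global monotonicity I would argue through large-$\lambda$ asymptotics and stability: first show that every positive solution converges to $w_h(x)$ uniformly on interior compacta, so that in the bulk of $\Omega$ it lives on the stable branch where $f_h'(w_h)<0$ is bounded away from zero; then prove that the principal eigenvalue of the linearization $-\Delta-\lambda f_h'(u)$ with Robin data is strictly positive for all large $\lambda$, using that the dominant interior term $-\lambda f_h'(w_h)\gg0$ overwhelms the thin, measure-small boundary-layer region where $f_h'$ may be positive. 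Strict stability and non-degeneracy of \emph{every} positive solution then force $u_*=u^*$: if $u_*\neq u^*$, the variational structure (solutions are critical points of $E(u)=\tfrac12\int_\Omega|\nabla u|^2+\tfrac q2\int_{\partial\Omega}u^2-\lambda\int_\Omega F_h(u)$, with $F_h'=f_h$) would produce a third, unstable intermediate solution, contradicting universal stability.

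The delicate part, and the step I expect to consume most of the work, is the quantitative control of this boundary-layer contribution: making the convergence $u\to w_h$ effective and bounding the region where $f_h'>0$ (width $\sim\lambda^{-1/2}$) precisely enough that the positive interior potential keeps the principal eigenvalue positive uniformly in $h\in U$. The conditions $c<2(1-H)$ and $K>K^*$ enter exactly here, since they are what guarantee the single stable branch and the sign $f_h'(w_h)<0$ that this eigenvalue estimate rests upon.
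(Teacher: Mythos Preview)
Your existence argument via sub- and supersolutions is sound and matches the paper in spirit (the paper takes $u_H$, the solution of the autonomous problem with constant harvesting $H$, as subsolution rather than $\varepsilon\phi_1$, but either works).

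The uniqueness strategy, however, rests on a misconception. You assert that ``the Robin condition forces a boundary layer in which any solution must sweep through'' the non-monotone interval $(u_-,u_+)$. This is false: unlike Dirichlet data, the condition $\partial_\eta u + qu = 0$ does not force $u$ to be small at $\partial\Omega$, and for large $\lambda$ the solution is close to the kinetic root uniformly on $\bar\Omega$, boundary included. The paper records exactly this as Proposition~\ref{Dancer}(ii): for the autonomous Robin problem with nonlinearity $f_\alpha$, the unique positive solution satisfies $r_0-\delta/2 < u \le r_0$ on all of $\bar\Omega$.

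With that fact available, uniqueness becomes the two-line D\'iaz--Saa argument you dismissed, not a boundary-layer analysis. The paper first brackets: any positive solution $u_h$ of \eqref{main} is a supersolution of the autonomous problem with $\alpha=H$ and a subsolution of the one with $\alpha=0$, so $u_H \le u_h \le u_0$ for large $\lambda$. By Proposition~\ref{Dancer}(ii) and Remark~\ref{r01} this gives the uniform a priori range $\tfrac{K(1-H)}{2} \le u_h \le K$ throughout $\bar\Omega$. On that interval one has
\[
\frac{d}{ds}\Big(\frac{f_h(s)}{s}\Big)=-\frac1K+c\,\frac{s^2-1}{(1+s^2)^2},
\]
and the second term is $O(s^{-2})=O(K^{-2})$ there, so for $K$ large the map $s\mapsto f_h(s)/s$ \emph{is} strictly decreasing on the full range of every solution. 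The identity $\int_\Omega(u_m\Delta u_h - u_h\Delta u_m)\,dx = 0$ (the Robin boundary terms cancel) then rules out two ordered positive solutions. Your linearized-stability/variational route with an intermediate unstable critical point may be completable in principle, but it is heavy machinery aimed at a boundary-layer obstacle that, for Robin data, simply is not there; the idea you are missing is the comparison with the autonomous endpoints $u_H,u_0$, which pins every solution above the grazing dip before any PDE estimates are needed.
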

\noindent Considering the optimal control problem in the above setting we have the following existence result for the optimal control.
 \begin{theorem}\label{T2}
 Let $c$, $K^*$ be as in Theorem \ref{T1}. Then for $K> K^*$ there exists $h^*\in U$ that maximizes $J(h)$.
\end{theorem}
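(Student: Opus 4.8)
The plan is to establish existence of a maximizer $h^*$ via the direct method of the calculus of variations. First I would observe that the admissible set $U=\{h\in L^2(\Omega):0\le h(x)\le H\}$ is bounded, convex, and closed in $L^2(\Omega)$, hence weakly sequentially compact. The functional $J$ is bounded above on $U$: since by Theorem~\ref{T1} each $h\in U$ yields a unique positive solution $u_h$, and the solutions admit a uniform $L^\infty$ bound (indeed, any positive solution satisfies $u\le K$ at an interior maximum from the sign of $f_h$, giving a priori control independent of $h$), the term $\int_\Omega h u_h\,\mathrm{d}x$ is bounded, while the cost term is nonnegative. Therefore $\sup_{h\in U}J(h)=:M<\infty$, and I would take a maximizing sequence $\{h_n\}\subset U$ with $J(h_n)\to M$.

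Next I would extract a subsequence, still denoted $\{h_n\}$, converging weakly in $L^2(\Omega)$ to some $h^*$; since $U$ is weakly closed, $h^*\in U$. The crux is to pass to the limit in $J$, which requires understanding the behavior of the associated solutions $u_{h_n}$. Using the uniform $L^\infty$ bound together with elliptic $W^{2,p}$ estimates for the Robin problem, I would argue that $\{u_{h_n}\}$ is bounded in $W^{2,p}(\Omega)$ for large $p$, hence (by compact embedding and the $C^2$ boundary) precompact in $C^1(\overline\Omega)$. Passing to a further subsequence, $u_{h_n}\to u^*$ strongly in $C^1(\overline\Omega)$ (and in particular in $L^2$). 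One then checks that the limit $u^*$ is the weak solution of \eqref{main} associated with $h^*$: the diffusion and grazing terms pass to the limit by strong convergence of $u_{h_n}$, while the harvesting term $h_n u_{h_n}$ converges weakly to $h^* u^*$ because $h_n\rightharpoonup h^*$ weakly and $u_{h_n}\to u^*$ strongly (a weak--strong product limit). By the uniqueness in Theorem~\ref{T1}, $u^*=u_{h^*}$, so $u_{h_n}\to u_{h^*}$.

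Finally I would verify that $J(h_n)\to J(h^*)=M$. For the payoff term, $\int_\Omega h_n u_{h_n}\,\mathrm{d}x\to\int_\Omega h^* u_{h^*}\,\mathrm{d}x$ by the same weak--strong argument. For the cost term, I would write $\int_\Omega(B_1+B_2 h)h\,\mathrm{d}x=B_1\int_\Omega h\,\mathrm{d}x+B_2\int_\Omega h^2\,\mathrm{d}x$; the linear part is weakly continuous, and the quadratic part $\int_\Omega h^2\,\mathrm{d}x=\|h\|_{L^2}^2$ is weakly lower semicontinuous, so $\int_\Omega(B_1+B_2 h^*)h^*\,\mathrm{d}x\le\liminf_n\int_\Omega(B_1+B_2 h_n)h_n\,\mathrm{d}x$. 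Because this cost enters $J$ with a minus sign, lower semicontinuity of the cost gives upper semicontinuity of $-(\text{cost})$ the wrong way; hence I would combine the exact limit of the payoff term with the liminf inequality for the cost to conclude $J(h^*)\ge\limsup_n J(h_n)=M$, and since $h^*\in U$ forces $J(h^*)\le M$, equality holds and $h^*$ is a maximizer.

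The main obstacle I expect is the limit passage in the solution operator, specifically establishing the uniform a priori bounds and the compactness of $\{u_{h_n}\}$ strong enough to take the weak--strong product limit in the nonlinear and harvesting terms; this is where the Robin boundary condition and the non-monotone grazing term $cu^2/(1+u^2)$ complicate the standard estimates, and where Theorem~\ref{T1}'s uniqueness is essential to identify the limit as $u_{h^*}$.
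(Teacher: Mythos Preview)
Your proposal is correct and follows essentially the same direct-method argument as the paper: bound $J$ above, take a maximizing sequence, extract weak limits $h_n\rightharpoonup h^*$ and strong limits for $u_{h_n}$, identify the limit as the solution for $h^*$, and conclude via weak lower semicontinuity of the cost term. The only notable difference is that the paper obtains compactness of $\{u_{h_n}\}$ from the simple $H^1$ energy estimate (test the equation with $u_{h_n}$ itself) and then uses Rellich to get strong $L^2$ convergence, whereas you invoke elliptic $W^{2,p}$ regularity to get $C^1$ convergence; your route gives more than is needed but is perfectly valid since $h_n\in L^\infty$ uniformly, and it makes the limit passage slightly cleaner.
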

\noindent In the next theorem, we give a characterization for any optimal $h$.
 \begin{theorem}\label{T3}
Let $c<2(1-H)$ and both $K$, $\lambda$ be large. Also, let $h\in U$ be any optimal control and $u_h$ be the corresponding maximal solution. Then  $h(x)$ is characterized by,
\begin{equation}\label{optcond}
 h(x)=\min\Big\{H,\max\{0,\frac{u_h-pu_h-B_1}{2B_2}\}\Big\},
\end{equation}
where $p$ in $H^2(\Omega)$ is a solution to the adjoint problem
 \begin{equation}
\left\{
   \begin{split}\label{eq:adjoint}
  &-\Delta p -\lambda \Big(p-\frac{2u_hp}{K}-2c\frac{u_h}{(1+u_h^2)^2} p-h(x)p \Big)=h(x),~~\rm{in}~\Omega,\\
  &\frac{\partial p}{\partial\eta}+qp=0, ~~~\rm{on} ~\partial\Omega.
  \end{split}
   \right.
 \end{equation}
\end{theorem}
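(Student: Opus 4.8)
The plan is to derive the first-order optimality conditions by the standard adjoint (Lagrange multiplier) method, adapted to the nonlinear grazing term and the Robin boundary condition. Fix an optimal control $h\in U$ and let $u:=u_h$ denote the associated maximal solution; in the regime $c<2(1-H)$ with $K,\lambda$ large this is the unique positive solution by Theorem~\ref{T1}. For a feasible direction $l$, meaning $l\in L^2(\Omega)$ with $h+\epsilon l\in U$ for all small $\epsilon>0$, write $u^\epsilon:=u_{h+\epsilon l}$. The first step is to prove that the control-to-state map $h\mapsto u_h$ admits a one-sided directional derivative $\psi:=\lim_{\epsilon\to 0^+}(u^\epsilon-u)/\epsilon$ and that $\psi$ is the weak solution of the linearized (sensitivity) problem
\begin{equation*}
-\Delta\psi-\lambda f_h'(u)\,\psi=-\lambda\,u\,l\ \text{ in }\Omega,\qquad \frac{\partial\psi}{\partial\eta}+q\psi=0\ \text{ on }\partial\Omega,
\end{equation*}
where $f_h'(u)=1-\frac{2u}{K}-\frac{2cu}{(1+u^2)^2}-h$ is obtained by differentiating $f_h$. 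Concretely, I would bound the difference quotients $(u^\epsilon-u)/\epsilon$ in $H^1(\Omega)$ uniformly in $\epsilon$, extract a weak limit, and pass to the limit in the weak formulation of \eqref{main} to identify it with $\psi$; uniqueness of the sensitivity problem then upgrades weak convergence to full convergence.

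Second, both the sensitivity problem and the adjoint problem \eqref{eq:adjoint} are governed by the linear operator $L:=-\Delta-\lambda f_h'(u)$ under the Robin boundary condition, so everything hinges on $L$ being an isomorphism, i.e. on the nondegeneracy of $u$. I would first derive an a priori bound $\|u_h\|_{L^\infty}\le C$ uniform in $h\in U$ (via a maximum-principle / sub--supersolution argument), which keeps $f_h'(u)$ controlled. The sign assumption $c<2(1-H)$ together with $K$ and $\lambda$ large is exactly what forces the principal eigenvalue of $L$ to stay bounded away from $0$; this is the same stability information underlying the uniqueness in Theorem~\ref{T1}. Invertibility of $L$ then yields existence and uniqueness of $\psi$, existence and uniqueness of the adjoint state $p$, and, by elliptic regularity, $p\in H^2(\Omega)$.

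Third, I would differentiate the payoff. Since $J(h)=\int_\Omega h\,u_h\,\mathrm{d}x-\int_\Omega(B_1+B_2 h)h\,\mathrm{d}x$, the chain rule gives
\begin{equation*}
\frac{\mathrm{d}}{\mathrm{d}\epsilon}J(h+\epsilon l)\Big|_{\epsilon=0^+}=\int_\Omega\big(u\,l+h\,\psi\big)\,\mathrm{d}x-\int_\Omega(B_1+2B_2 h)\,l\,\mathrm{d}x .
\end{equation*}
The term $\int_\Omega h\,\psi\,\mathrm{d}x$ is the obstruction, since $\psi$ depends on $l$ only implicitly; it is removed by the adjoint. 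Testing the sensitivity problem with $p$ and the adjoint problem \eqref{eq:adjoint} with $\psi$ and subtracting, Green's second identity makes the boundary integrals cancel precisely because $\psi$ and $p$ obey the same Robin condition $\frac{\partial(\cdot)}{\partial\eta}+q(\cdot)=0$, while the two $\lambda f_h'(u)$ terms cancel identically. This identifies $\int_\Omega h\,\psi\,\mathrm{d}x$ with a multiple of $\int_\Omega u\,p\,l\,\mathrm{d}x$, so that, with the normalization of the adjoint state fixed by \eqref{eq:adjoint}, the reduced gradient becomes
\begin{equation*}
\frac{\mathrm{d}}{\mathrm{d}\epsilon}J(h+\epsilon l)\Big|_{\epsilon=0^+}=\int_\Omega l\,\big(u_h-p\,u_h-B_1-2B_2 h\big)\,\mathrm{d}x .
\end{equation*}

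Finally, because $U$ is convex and $h$ maximizes $J$, the directional derivative is nonpositive in every feasible direction, giving the variational inequality $\int_\Omega l\,(u_h-pu_h-B_1-2B_2 h)\,\mathrm{d}x\le 0$ for all admissible $l$. A standard pointwise argument with the box constraint $0\le h\le H$ then forces $u_h-pu_h-B_1-2B_2h=0$ wherever $0<h(x)<H$, while the constraints pin $h$ to the endpoints otherwise; solving for $h$ and projecting onto $[0,H]$ yields exactly the characterization \eqref{optcond}. I expect the main obstacle to be the second step: the grazing term destroys the monotonicity/comparison structure available in the pure logistic Dirichlet problem of \cite{Canada98,D-Lenhart2009}, so differentiability of $h\mapsto u_h$ and invertibility of $L$ must be obtained through uniform a priori estimates and the eigenvalue analysis made possible by $c<2(1-H)$ and large $K,\lambda$, rather than by monotone-iteration shortcuts.
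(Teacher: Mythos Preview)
Your proposal is correct and follows essentially the same route as the paper: differentiability of $h\mapsto u_h$ via uniform $H^1$ bounds on the difference quotients, positivity of the principal eigenvalue of $-\Delta-\lambda f_h'(u_h)$ (the paper's Lemma~\ref{newlemma}) to make both the sensitivity and adjoint problems well posed, and then the adjoint substitution to reduce the directional derivative to a pointwise variational inequality on the box $[0,H]$. One bookkeeping point: when you carry out the Green's identity step with the adjoint normalized by \eqref{eq:adjoint}, the identity you obtain is $\int_\Omega h\,\psi\,\mathrm{d}x=-\lambda\int_\Omega u_h\,p\,l\,\mathrm{d}x$, so the reduced gradient is $\int_\Omega l\,(u_h-\lambda\,p\,u_h-B_1-2B_2h)\,\mathrm{d}x$; the paper's own derivation arrives at $h=\min\{H,\max\{0,(u_h-\lambda p u_h-B_1)/(2B_2)\}\}$, and the missing $\lambda$ in the displayed statement \eqref{optcond} is a typo there rather than something your argument should reproduce.
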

\noindent In the case $B_2>0,$ the state equation \eqref{main} and the adjoint equation \eqref{eq:adjoint}  together with \eqref{optcond} is called optimality system (OS), which is given by 
\begin{equation}\label{os}
\left\{
\begin{split}
&-\Delta u_h=\lambda\left[u_h-\frac{u_h^2}{K}-\frac{cu_h^2}{1+u_h^2}-h(x)u_h\right],~~\rm{in}~\Omega,\\
&\frac{\partial u_h}{\partial\eta}+qu_h=0,~~~\rm{on} ~\partial\Omega,\\
&-\Delta p-\lambda\left[p-\frac{2u_hp}{K}-\frac{2cu_hp}{(1+u_h^2)^2}-hp\right]=h,\rm{in}~\Omega,\\
&\frac{\partial p}{\partial\eta}+qp=0,~~~\rm{on}~\partial\Omega.
\end{split}\right.
\end{equation}
Next, we state a uniqueness result which gives a characterization of the unique optimal control in terms of the unique solutions of (OS).
\begin{theorem}\label{T4}
Let $N\in\{2,3\}$. The solution $h,p,u_h$ of the optimality system (OS), with $u_h>0$ in $\Omega$, is unique for a large value of $B_2$.
\end{theorem}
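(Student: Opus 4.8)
The plan is to argue by the standard uniqueness-by-contradiction scheme for optimality systems, adapted to the Robin setting and the grazing nonlinearity. Suppose $(u_1,p_1,h_1)$ and $(u_2,p_2,h_2)$ both solve (OS) with $u_i>0$, and write $w=u_1-u_2$, $\psi=p_1-p_2$. The goal is to show $w\equiv\psi\equiv 0$ once $B_2$ is large, after which $h_1=h_2$ follows immediately from the pointwise characterization \eqref{optcond}. Before estimating, I would collect three facts. First, the a priori bounds $0<u_i\le M$ (from sub/supersolutions and the maximum principle) and, by elliptic regularity, $p_i\in H^2(\Omega)$; since $N\in\{2,3\}$ we have the embedding $H^2(\Omega)\hookrightarrow L^\infty(\Omega)$, giving a uniform bound $\|p_i\|_{L^\infty}\le M_p$ -- this is the \emph{only} place the dimension restriction is used. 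Second, the maps $u\mapsto u^2/(1+u^2)$ and $u\mapsto 2u/(1+u^2)^2$ are globally Lipschitz, so every nonlinear difference reduces to a term bounded by $C|w|$ with $C$ independent of $B_2$. Third, because $s\mapsto\min\{H,\max\{0,s\}\}$ is a contraction, \eqref{optcond} yields the key smallness estimate
\[
|h_1-h_2|\le\frac{1}{2B_2}\big|u_1(1-p_1)-u_2(1-p_2)\big|\le\frac{C}{2B_2}\big(|w|+|\psi|\big),
\]
where the $B_1$ terms cancel and the $L^\infty$ bounds from the first step are used.

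I would then run two energy estimates. Subtracting the state equations in (OS), testing with $w$, and converting the boundary contribution through the Robin condition gives
\[
\|\nabla w\|_{L^2}^2+q\|w\|_{L^2(\partial\Omega)}^2=\lambda\int_\Omega a(x)\,w^2-\lambda\int_\Omega u_2\,(h_1-h_2)\,w,
\]
where $a(x)=1-(u_1+u_2)/K-c\,g'(\xi)-h_1$ is the mean-value coefficient coming from $f_{h}$, with $g(u)=u^2/(1+u^2)$. The decisive structural point, inherited from the regime of Theorem \ref{T1} (with $c<2(1-H)$, $K>K^*$, and $\lambda$ large), is that the positive solutions localize near the upper equilibrium, so the linearized state operator is \emph{coercive}; concretely $a(x)\le 0$, whence $\lambda\int_\Omega a\,w^2\le 0$ and the Robin--Poincar\'e equivalence $\|w\|_{H^1}^2\le C\big(\|\nabla w\|_{L^2}^2+\|w\|_{L^2(\partial\Omega)}^2\big)$ gives $c_0\|w\|_{H^1}^2\le\|\nabla w\|_{L^2}^2+q\|w\|_{L^2(\partial\Omega)}^2$. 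Bounding the remaining term by $\lambda M\,\|h_1-h_2\|_{L^2}\|w\|_{L^2}$ and inserting the smallness estimate produces $c_0\|w\|_{L^2}\le\frac{C\lambda}{B_2}\big(\|w\|_{L^2}+\|\psi\|_{L^2}\big)$. Testing the difference of adjoint equations with $\psi$ and using the $L^\infty$ bounds on $u_i,p_i$ to control the products $u_1\psi$ and $p_2 w$ yields, by the same coercivity, the companion bound $\|\psi\|_{L^2}\le C\big(\|w\|_{L^2}+\|h_1-h_2\|_{L^2}\big)\le C'\|w\|_{L^2}+\tfrac{C'}{B_2}\|\psi\|_{L^2}$.

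Finally I would close the loop: for $B_2$ large the adjoint bound gives $\|\psi\|_{L^2}\le C_3\|w\|_{L^2}$, and substituting this into the state bound gives $c_0\|w\|_{L^2}\le\frac{C\lambda(1+C_3)}{B_2}\|w\|_{L^2}$; choosing $B_2$ so large that the prefactor is below $c_0$ forces $w\equiv 0$, hence $\psi\equiv 0$ and $h_1=h_2$. I expect the genuine obstacle to be \emph{not} the algebra but establishing the coercivity -- the sign condition $a(x)\le 0$, or more robustly a uniform lower bound on the principal eigenvalue of the linearized operator $-\Delta-\lambda f_h'(u_h)$ with Robin conditions -- \emph{uniformly in} $h\in U$. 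The energy identities carry the large factor $\lambda$, so the zeroth-order terms must be dominated by the coercivity rather than merely absorbed, and near the boundary layer $u_i$ may dip below $K/2$, where the naive sign of $a$ fails; reconciling this with the $1/B_2$ smallness, i.e. proving the localization of $u_h$ quantitatively and uniformly and ensuring $B_2$ may be taken large relative to $\lambda$, is the step I would spend the most effort on.
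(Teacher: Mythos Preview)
Your overall scheme is the same as the paper's: subtract the two state equations and the two adjoint equations, test with $w=u_1-u_2$ and $\psi=p_1-p_2$, use coercivity of the linearized operators on the left, and exploit the $1/(2B_2)$ in the control characterization on the right. Two differences are worth flagging. First, the ``genuine obstacle'' you identify---coercivity uniform in $h$---is exactly what the paper settles in Lemma~\ref{newlemma}: it does \emph{not} rely on the pointwise sign $a(x)\le 0$ (which can fail, since $g'(\xi)=2\xi/(1+\xi^2)^2\to 0$ for the large values of $\xi$ forced by $u_i\ge K(1-H)/2$), but instead compares the potential $V_2$ to $V_1=\lambda(-1+h+u_h/K+cu_h/(1+u_h^2))$, for which $\sigma_1(V_1)=0$ because $u_h$ solves the state equation, and shows $V_2-V_1\ge\lambda\alpha_0>0$; this yields $\sigma_1(V_2)>0$ and hence the coercivity inequality \eqref{eig-q1}. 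Second, the paper adds a preparatory lemma giving $\|p\|_{L^\infty}\le M'/B_2$ with $M'$ independent of $B_2$ (obtained by bounding $\|h\|_{L^2}$ through the characterization itself and feeding it back through $H^2\hookrightarrow L^\infty$); with this in hand \emph{every} cross term on the right of the combined energy identity---including $\tfrac{2\lambda}{K}\int p\,w\,\psi$ and the grazing cross term---already carries a $1/B_2$ factor, so the paper concludes in a single step
\[
M\big(\|w\|_{H^1}^2+\|\psi\|_{H^1}^2\big)\le \frac{M_6}{B_2}\big(\|w\|_{H^1}^2+\|\psi\|_{H^1}^2\big).
\]
Your two-step substitution (first $\|\psi\|\le C_3\|w\|$ from the adjoint estimate using only a uniform $L^\infty$ bound on $p$, then back into the state estimate) also closes and avoids that lemma, at the cost of a slightly less symmetric argument.
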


\noindent In the next section, we will prove Theorem \ref{T1}. In Section \ref{s3} we will prove
the existence of an optimal control. The optimality system will be derived in Section \ref{s4} and finally in Section \ref{s5} we will prove the uniqueness of the optimality system i.e., Theorem \ref{T4}.

%%%%%%%%%%%%%%%%Existence and uniqueness of positive solutions%%%%%%%%%%%%%%%%%%%%%%%
\section{Existence and uniqueness of positive solutions}\label{s2}
\noindent Consider the following autonomous problem:
\begin{equation}
\label{general}
            \left\{
            \begin{split}
            -\Delta u &= \lambda f(u), &&\quad x \in \Omega, \\
           \frac{\partial u}{\partial \eta}+&q u = 0, &&\quad x \in\partial\Omega.
            \end{split} \right.
\end{equation}
The existence of positive solutions of \eqref{general} has been proved under various assumptions on $f$ (see \cite{mohan} for a discussion). 
We are interested in a class of nonlinearities satisfying the following hypothesis:\\
{\bf (A):}  $f\in C^2([0,\infty))$ is such that $f(0)=0$, $f'(0)>0$, and there exists $r_0>0$ with  $f(s)>0$ on $(0,r_0)$ and $f(s)<0$ for $s>r_0$.
It is known that for such $f$, the boundary value problem, \eqref{general} admits a positive solution for $\lambda>\frac{\lambda_1(\Omega)}{f'(0)}$, where $\lambda_1(\Omega)$ is the principal eigenvalue of the following eigenvalue problem:
\begin{equation}
\label{eigen}
            \left\{
            \begin{split}
            &-\Delta u = \lambda u, \quad x \in \Omega, \\
           &\frac{\partial u}{\partial \eta}+q u = 0, \quad x \in\partial\Omega.
            \end{split} \right.
\end{equation}
See Theorem 1.1 in Mohan et al \cite{mohan} for a proof for the same. 
In the same paper by  Mohan et al., they have proved the following uniqueness result: 
% (by choosing $\alpha(u)=\frac{1}{q+1}$ in Theorem 1.1)
 \begin{proposition}
 	\label{Dancer}
 Let $f$ satisfy (A), then the following results hold:
\begin{enumerate}[(i)]
\item For all sufficiently large $\lambda$, the boundary value problem \eqref{general} has a unique positive solution $u$
such that $\|u\|_\infty\leq r_0 $ (see Theorem 1.3 of \cite{mohan}).
\item Moreover, given $\delta>0$ there exists $\lambda_\delta>0$ such that for $\lambda>\lambda_\delta$ the positive solution of \eqref{general} satisfies the following: $$r_0-\frac{\delta}{2}<u(x)\leq r_0 \mbox{ for } x\in \bar{\Omega}$$ (see Lemma 3.1 of \cite{mohan}).
\end{enumerate}
\end{proposition}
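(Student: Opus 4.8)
The plan is to prove the two items in the order existence/$L^\infty$-bound, then localization near $r_0$, then uniqueness, since the localization statement (ii) is exactly the tool that makes the uniqueness in (i) tractable.

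\medskip

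\noindent\textbf{Existence and the bound $\|u\|_\infty\le r_0$.} First I would produce a positive solution by the method of sub- and supersolutions. The constant $\bar u\equiv r_0$ is a supersolution: since $f>0$ on $(0,r_0)$ and $f<0$ on $(r_0,\infty)$ with $f$ continuous we have $f(r_0)=0$, so $-\Delta\bar u=0=\lambda f(r_0)$ in $\Omega$, while $\frac{\partial\bar u}{\partial\eta}+q\bar u=qr_0>0$ on $\partial\Omega$. For a subsolution, let $\phi_1>0$ be the principal eigenfunction of \eqref{eigen} with eigenvalue $\lambda_1(\Omega)$, normalized so that $\|\phi_1\|_\infty=1$, and set $\underline u=\epsilon\phi_1$. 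Then $-\Delta\underline u=\lambda_1(\Omega)\epsilon\phi_1$ and $\frac{\partial\underline u}{\partial\eta}+q\underline u=0$. Using $f'(0)>0$ together with $f(s)/s\to f'(0)$ as $s\to0^+$, for every $\lambda>\lambda_1(\Omega)/f'(0)$ one can fix $\epsilon>0$ small enough that $\lambda_1(\Omega)\epsilon\phi_1\le\lambda f(\epsilon\phi_1)$ pointwise in $\Omega$, so $\underline u$ is a subsolution. Since $\underline u\le r_0=\bar u$ for $\epsilon\le r_0$, monotone iteration yields a positive solution $u$ with $0<\epsilon\phi_1\le u\le r_0$; in particular $\|u\|_\infty\le r_0$.

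\medskip

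\noindent\textbf{Localization (ii).} The upper bound $u\le r_0$ is the supersolution estimate. For the lower bound I would test \eqref{general} against $\phi_1$ and use that $\phi_1$ solves \eqref{eigen}, which gives $\lambda_1(\Omega)\int_\Omega u\phi_1\,dx=\lambda\int_\Omega f(u)\phi_1\,dx$, hence $\int_\Omega f(u)\phi_1\,dx=\frac{\lambda_1(\Omega)}{\lambda}\int_\Omega u\phi_1\,dx\le\frac{\lambda_1(\Omega)\,r_0}{\lambda}\int_\Omega\phi_1\,dx\to0$ as $\lambda\to\infty$. For $0<u\le r_0$ the integrand $f(u)\phi_1\ge0$, and $u\ge\epsilon\phi_1$ is bounded below away from $0$ on compact subsets uniformly in $\lambda$. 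Thus $f(u)\to0$ and, since the only zero of $f$ in $(0,r_0]$ that is compatible with this lower bound is $r_0$, we get $u\to r_0$ a.e.\ in $\Omega$. Upgrading to uniform convergence on $\bar\Omega$ via the uniform $L^\infty$ bound, interior elliptic estimates, and the Robin condition (used to exclude a boundary layer) then yields, for given $\delta>0$, a $\lambda_\delta$ with $r_0-\frac\delta2<u\le r_0$ on $\bar\Omega$ whenever $\lambda>\lambda_\delta$.

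\medskip

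\noindent\textbf{Uniqueness (i).} Let $u_1,u_2$ be positive solutions with $\|u_i\|_\infty\le r_0$. By (ii), for large $\lambda$ both take values in the thin band $(r_0-\frac\delta2,r_0]$. Choosing $\delta$ small so that $f'\le0$ on this band (possible because $f$ changes sign from $+$ to $-$ at $r_0$, so $f'(r_0)\le0$, and in the nondegenerate case $f'(r_0)<0$ on a left-neighborhood of $r_0$), set $w=u_1-u_2$. Then $-\Delta w=\lambda\,c(x)\,w$ with $c(x)=\int_0^1 f'\big(u_2+t(u_1-u_2)\big)\,dt\le0$, together with $\frac{\partial w}{\partial\eta}+qw=0$. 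Testing against $w$ gives $\int_\Omega|\nabla w|^2\,dx+\int_{\partial\Omega}qw^2\,dS-\lambda\int_\Omega c\,w^2\,dx=0$, where every term is nonnegative (using $q>0$ and $c\le0$). Hence $w\equiv0$ and $u_1=u_2$. Equivalently, $-\Delta-\lambda c$ under the Robin condition has principal eigenvalue bounded below by $\lambda_1(\Omega)>0$, so it satisfies the maximum principle.

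\medskip

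\noindent\textbf{Main obstacle.} The crux is the uniqueness step, and specifically guaranteeing $c(x)\le0$: this forces one to localize the solutions close enough to $r_0$ that the difference quotient of $f$ is nonpositive, i.e.\ to match the band width $\frac\delta2$ in (ii) to the size of a left-neighborhood of $r_0$ on which $f'\le0$. The genuinely delicate situation is the degenerate case $f'(r_0)=0$, where $f'$ need not keep a fixed sign arbitrarily close to $r_0$; there one needs a finer second-order comparison, or a direct sweeping argument exploiting that $r_0$ is an asymptotically stable equilibrium of $\dot u=f(u)$. A secondary technical point is making the lower bound in (ii) uniform up to $\partial\Omega$, where the Robin condition with $q>0$ is essential, since Dirichlet data would create a boundary layer and preclude a uniform lower bound.
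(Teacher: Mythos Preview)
The paper does not supply its own proof of this proposition: it is quoted from \cite{mohan} with pointers to Theorem~1.3 and Lemma~3.1 there, and no argument is reproduced in the present paper. So there is no in-paper proof to compare your attempt against.

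As to your argument itself, existence and the bound $\|u\|_\infty\le r_0$ are fine (indeed every positive solution obeys the latter by the maximum principle, not only the one you construct). The real gap is a circularity between (ii) and the uniqueness step. Your localization relies on the lower barrier $u\ge\epsilon\phi_1$, which you have only for the particular solution produced by monotone iteration; an \emph{arbitrary} positive solution is not a~priori trapped above a fixed, $\lambda$-independent multiple of $\phi_1$. Yet in the uniqueness argument you invoke (ii) for two generic positive solutions $u_1,u_2$. To close the loop you need a $\lambda$-uniform positive lower bound valid for every positive solution---for instance by showing that any positive solution dominates the minimal one, or by a spectral argument ruling out small-amplitude solutions when $\lambda$ is large. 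The passage from $\int_\Omega f(u)\phi_1\,dx\to 0$ to $u\to r_0$ uniformly on $\bar\Omega$ can be made rigorous via $H^1$-compactness, but the same missing lower bound is exactly what is needed to exclude the competing limit $u_*\equiv 0$. Finally, as you yourself note, the linearized inequality $c(x)\le 0$ is unavailable when $f'(r_0)=0$, a case hypothesis~(A) does not rule out; the device the paper later uses for the non-autonomous problem---comparing $f(s)/s$ rather than $f'(s)$---is one alternative, but it too requires monotonicity of $f(s)/s$ on the relevant band, which (A) by itself does not guarantee.
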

\noindent The following result will follow from the above discussion.
\begin{proposition}
	\label{existence_fH}
  Let $0\leq \alpha\leq H$, $c<2(1-\alpha)$, and $f_{\alpha}(s)=s-\frac{s^2}{K}-c\frac{s^2}{1+s^2}-\alpha s$. Then there exists $K^*_{\alpha}>0$ and $\lambda_{\alpha}>0$ such that for $K>K^*_{\alpha}$ and $\lambda>\lambda_{\alpha}$, the boundary value problem
  \begin{equation}
\label{fH}
            \left\{
            \begin{split}
            &-\Delta u = \lambda f_{\alpha}(u), \quad x \in \Omega\\ 
            &\frac{\partial u}{\partial \eta}+q u = 0, \quad x \in\partial\Omega
            \end{split} \right.
\end{equation}
has a unique positive solution $u_{\alpha}$.
\end{proposition}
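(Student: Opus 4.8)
The plan is to verify that the nonlinearity $f_\alpha$ satisfies hypothesis (A), and then to invoke the existence statement preceding Proposition \ref{Dancer} together with Proposition \ref{Dancer}(i) to obtain a unique positive solution of \eqref{fH} for large $\lambda$. The regularity $f_\alpha\in C^2([0,\infty))$ is immediate since $s\mapsto s^2/(1+s^2)$ is smooth, and $f_\alpha(0)=0$ is clear. Differentiating gives $f_\alpha'(0)=1-\alpha$, which is strictly positive because $\alpha\le H<1$. Thus the only nontrivial part of (A) is the ``single hump'' sign condition: the existence of $r_0>0$ with $f_\alpha>0$ on $(0,r_0)$ and $f_\alpha<0$ on $(r_0,\infty)$.

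For the sign condition I would factor $f_\alpha(s)=s\,g_\alpha(s)$ with
\[
g_\alpha(s)=(1-\alpha)-\frac{s}{K}-\frac{cs}{1+s^2},
\]
so that for $s>0$ the sign of $f_\alpha$ equals that of $g_\alpha$. The hypothesis $c<2(1-\alpha)$ enters precisely here: since $\max_{s\ge0}\frac{cs}{1+s^2}=\frac{c}{2}<1-\alpha$, the quantity $(1-\alpha)-\frac{cs}{1+s^2}$ stays bounded below by the positive constant $(1-\alpha)-\frac{c}{2}$ for every $s\ge0$. Consequently, on the bounded interval $[0,\sqrt3]$ one has $g_\alpha(s)\ge(1-\alpha)-\frac{c}{2}-\frac{\sqrt3}{K}$, which is strictly positive once $K$ exceeds $K^*_\alpha:=\sqrt3\big/\big((1-\alpha)-\frac{c}{2}\big)$.

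I then want to pin down the shape of $g_\alpha$. A computation gives $g_\alpha'(s)=-\frac1K-c\,\phi(s)$ with $\phi(s):=\frac{1-s^2}{(1+s^2)^2}$, and $\phi$ attains its minimum $-\tfrac18$ at $s=\sqrt3$ and tends to $0$ as $s\to\infty$. Hence $\{g_\alpha'>0\}=\{\phi<-\tfrac1{cK}\}$ is either empty (in which case $g_\alpha$ is strictly decreasing, the favorable case) or a single bounded interval $(s_1,s_2)$ with $1<s_1<\sqrt3<s_2$; thus $g_\alpha$ decreases on $(0,s_1)$, increases on $(s_1,s_2)$, and decreases on $(s_2,\infty)$ to $-\infty$. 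Since the local minimum $s_1$ lies in $[0,\sqrt3]$, the bound above gives $g_\alpha(s_1)>0$, so $g_\alpha>0$ on all of $(0,s_2]$ and is then strictly decreasing from a positive value to $-\infty$ on $(s_2,\infty)$. This forces a unique zero $r_0\in(s_2,\infty)$ with $g_\alpha>0$ on $(0,r_0)$ and $g_\alpha<0$ on $(r_0,\infty)$, establishing (A). Equivalently, clearing denominators turns $g_\alpha(s)=0$ into the cubic $s^3-K(1-\alpha)s^2+(1+Kc)s-K(1-\alpha)=0$, all of whose real roots are positive by Descartes' rule, and the monotonicity analysis shows it has exactly one positive root for $K$ large.

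With (A) verified, existence of a positive solution of \eqref{fH} follows for $\lambda>\lambda_1(\Omega)/f_\alpha'(0)=\lambda_1(\Omega)/(1-\alpha)$, while Proposition \ref{Dancer}(i) gives uniqueness for all $\lambda$ beyond some threshold; taking $\lambda_\alpha$ to be the larger of the two completes the proof. To see that uniqueness holds among \emph{all} positive solutions and not merely those of norm $\le r_0$, I would record the a priori bound $\|u\|_\infty\le r_0$: at an interior maximum with $u>r_0$ one has $-\Delta u\ge0$ but $\lambda f_\alpha(u)<0$, and a boundary maximum is excluded by the Hopf lemma against the Robin condition $\partial u/\partial\eta=-qu<0$. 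The main obstacle is the sign condition itself: the grazing term $cs/(1+s^2)$ destroys the global monotonicity of $g_\alpha$ available in the pure logistic case, so the argument must instead confine the non-monotone ``wiggle'' of $g_\alpha$ to the bounded region $[0,\sqrt3]$ and use $c<2(1-\alpha)$ together with large $K$ to keep $g_\alpha$ positive there, thereby isolating a single zero located far out near $s\approx K(1-\alpha)$.
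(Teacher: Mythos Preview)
Your proposal is correct and follows the same strategy as the paper: verify that $f_\alpha$ satisfies hypothesis (A) and then appeal to Proposition~\ref{Dancer}. The paper's own proof is terse---it simply asserts that the sign condition in (A) holds for $c<2(1-\alpha)$ and $K$ large by invoking an external reference (Proposition~3.2 of \cite{LSS})---whereas you give a self-contained argument via the factorisation $f_\alpha(s)=s\,g_\alpha(s)$ and a monotonicity analysis of $g_\alpha$ on the intervals determined by the critical points of $\phi(s)=(1-s^2)/(1+s^2)^2$. Your explicit threshold $K^*_\alpha=\sqrt3\big/\big((1-\alpha)-\tfrac{c}{2}\big)$ and the observation that the local minimum of $g_\alpha$ lies in $[0,\sqrt3]$, where $g_\alpha$ is forced positive, make the ``unique hump'' conclusion transparent; this is precisely the content the paper outsources. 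Your closing remark, that the a~priori bound $\|u\|_\infty\le r_0$ follows from the maximum principle and the Robin condition so that Proposition~\ref{Dancer}(i) indeed yields uniqueness among \emph{all} positive solutions, is a welcome clarification that the paper leaves implicit.
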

\begin{proof}
By Proposition \ref{Dancer}, it is enough if we show that $f_{\alpha}$ satisfies (A).
Using arguments similar to the ones made in the proof of Proposition 3.2 of \cite{LSS}, it is easy to show that for $c<2(1-{\alpha})$ there exists a $K^*_{\alpha}$ such that for $K>K^*_{\alpha}$ there is a unique $r_0>0$ such that $f_{\alpha}(r_0)=0$. 
\end{proof}
\begin{remark}\label{r01}
{\rm Note that $f_H(\frac{K(1-\alpha)}{2})>0$ and $f_{\alpha}(K)<0$ and hence $r_0\in(\frac{K(1-\alpha)}{2}, K)$.}
\end{remark}

\noindent Let $c<2(1-H).$ Corresponding to $\alpha=0$, $\alpha=H$, there exist $\lambda\geq \max\{\lambda_0,\lambda_H\}$ and $K^*\geq\max\{K^*_0,K^*_H\}$ such that \eqref{general} has unique solutions $u_0$ and $u_H$ respectively. The next proposition gives an estimate for $u_0$ and $u_H$.
\begin{proposition}\label{remark2}
 Let $K>K^*$ and $c<2(1-H)$. Given $\delta>0$ there exists $\lambda_{\delta}>\max\{\lambda_0,\lambda_H\}$ such that for $\lambda>\lambda_{\delta}$, $\frac{K(1-H)}{2}<u_H<u_0<K$ in $\bar{\Omega}$.
\end{proposition}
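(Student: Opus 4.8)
The plan is to reduce the pointwise ordering to a comparison of the unique positive zeros of the scalar nonlinearities $f_0$ and $f_H$, and then transfer this information to the solutions $u_0,u_H$ via the sharp asymptotic estimate of Proposition \ref{Dancer}(ii). Write $\rho_0$ and $\rho_H$ for the unique positive zeros of $f_0$ and $f_H$ furnished by Proposition \ref{existence_fH} (these exist since $c<2(1-H)<2$ and $K>K^*\geq\max\{K^*_0,K^*_H\}$). Remark \ref{r01} already locates them, $\rho_0\in(K/2,K)$ and $\rho_H\in\big(\tfrac{K(1-H)}{2},K\big)$, which will immediately furnish the two outer inequalities once the solutions are pinned near their zeros.

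The one genuinely nontrivial algebraic fact I would establish is $\rho_H<\rho_0$. Since $f_H(s)=f_0(s)-Hs<f_0(s)$ for $s>0$, evaluating at $s=\rho_0$ gives $f_H(\rho_0)=-H\rho_0<0$; because $f_H>0$ on $(0,\rho_H)$ and $f_H<0$ on $(\rho_H,\infty)$ by hypothesis (A), this sign forces $\rho_0>\rho_H$. I would then record the gap $\epsilon:=\rho_0-\rho_H>0$, which is the quantity that ultimately makes the strict middle inequality possible.

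To finish, I would invoke Proposition \ref{Dancer}(ii) for both nonlinearities with a common parameter $\delta_0:=\min\big\{\delta,\;\epsilon,\;\rho_H-\tfrac{K(1-H)}{2}\big\}>0$. This produces a threshold $\lambda_\delta>\max\{\lambda_0,\lambda_H\}$ such that, for $\lambda>\lambda_\delta$, both $\rho_0-\tfrac{\delta_0}{2}<u_0\le\rho_0$ and $\rho_H-\tfrac{\delta_0}{2}<u_H\le\rho_H$ hold uniformly on $\bar\Omega$. Chaining these bounds yields the three required inequalities: $u_H\le\rho_H=\rho_0-\epsilon<\rho_0-\tfrac{\delta_0}{2}<u_0$ (the middle one, using $\delta_0\le\epsilon$), $u_H>\rho_H-\tfrac{\delta_0}{2}>\tfrac{K(1-H)}{2}$ (the lower one, using $\delta_0\le\rho_H-\tfrac{K(1-H)}{2}$), and $u_0\le\rho_0<K$ (the upper one, directly from Remark \ref{r01}).

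The only real content of the argument is the scalar comparison $\rho_H<\rho_0$ together with the sign analysis of $f_H$; the rest is bookkeeping. The points to watch are that Proposition \ref{Dancer}(ii) is being applied to two \emph{different} nonlinearities, so $\lambda_\delta$ must be taken above both of the resulting thresholds as well as above $\max\{\lambda_0,\lambda_H\}$, and that the prescribed $\delta$ must be shrunk to $\delta_0$ so that neither the gap $\epsilon$ nor the margin $\rho_H-\tfrac{K(1-H)}{2}$ is overwhelmed by the error term $\tfrac{\delta_0}{2}$. Since the conclusion of the proposition does not involve $\delta$, replacing $\delta$ by $\delta_0$ in the application is harmless.
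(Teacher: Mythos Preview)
Your argument is correct. The route you take for the middle inequality $u_H<u_0$ differs from the paper's: the paper argues directly that $f_H(s)<f_0(s)$ for $s>0$ forces $u_H<u_0$ (an implicit sub/supersolution comparison together with uniqueness of $u_0$), whereas you compare the scalar zeros $\rho_H<\rho_0$ and then use Proposition~\ref{Dancer}(ii) to pin $u_H\le\rho_H<\rho_0-\tfrac{\delta_0}{2}<u_0$. Your approach is more self-contained, since it relies only on the asymptotic estimate already quoted and an elementary sign check, while the paper's one-line comparison presupposes a PDE ordering principle that is not spelled out; on the other hand, the paper's route gives $u_H<u_0$ for all $\lambda$ large enough to guarantee uniqueness, not just for $\lambda$ above a threshold depending on the gap $\epsilon=\rho_0-\rho_H$. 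For the outer inequalities the two proofs coincide, and your observation that the $\delta$ in the statement plays no role in the conclusion (so may be shrunk to $\delta_0$) is exactly right.
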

\begin{proof}
Clearly $\|u_0\|_\infty<K$ as $f_0(K)<0$ for $K>0$. Since $f_H(s)<f_0(s)$ for all $s>0$, we have $u_H< u_0$. Let $\delta>0$ be such that $\frac{K(1-H)}{2}<r_0-\frac{\delta}{2}$. Then by Proposition 2.1  there exists a $\lambda_{\delta}$ such that for $\lambda>\lambda_{\delta}$,
 $\frac{K(1-H)}{2}<u_{H}< u_{0}<K$ in $\bar{\Omega}.$
\end{proof}
\noindent Now we will prove the existence and uniqueness of positive solutions of the non-autonomous equation \eqref{main}. We will use the method of sub-supersolutions to prove our existence result. By a subsolution (Supersolution) of \eqref{main}, we mean a function $\psi\in H^1(\Omega)\cap C(\bar{\Omega})$ with
 \begin{equation*}\label{Subsup}
\int_{\Omega}\nabla\psi\nabla\phi ~\rm{d}x +\int_{\partial\Omega}q\psi \phi ~\rm{d}s\leq(\geq) \int_{\Omega}\lambda[\psi-\frac{\psi^2}{K}-c\frac{\psi^2}{1+\psi^2}-h(x)\psi]\phi ~\rm{d}x \quad \rm{in} ~\Omega,
\end{equation*}
for every $\phi\in  C^{\infty}(\Omega)$ with $\phi\geq 0$ in $\Omega$. Then the following lemma holds (see \cite{Am})
\begin{lemma}\label{lemmasub}
  Let $\psi$ and $\phi$ be a subsolution and supersolution of \ref{main} respectively. Then \ref{main} has a solution $u\in H^1(\Omega)$ such that $\psi \leq u \leq \phi$. 
\end{lemma}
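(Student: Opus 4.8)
The plan is to prove the lemma by the classical monotone iteration scheme, exploiting that on any bounded $u$-interval the reaction term becomes nondecreasing after a linear shift. Assuming, as in \cite{Am}, the ordering $\psi\le\phi$, set $m=\min_{\bar\Omega}\psi$ and $\overline{M}=\max_{\bar\Omega}\phi$, and write $g(x,s)=\lambda\big[s-\frac{s^2}{K}-c\frac{s^2}{1+s^2}-h(x)s\big]$. Since $g(x,\cdot)\in C^1$ and $h$ is bounded, $\partial_s g$ is bounded on $[m,\overline{M}]$, so I may fix a constant $M>0$ with $\partial_s g(x,s)+M\ge 0$ there; then $F(x,s):=g(x,s)+Ms$ is nondecreasing in $s$ on $[m,\overline{M}]$, and \eqref{main} is equivalent to $-\Delta u+Mu=F(x,u)$ with the same Robin condition.

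Next I would set up the linear solution operator and its comparison principle. For $w\in L^2(\Omega)$ the problem $-\Delta v+Mv=w$ in $\Omega$, $\frac{\partial v}{\partial\eta}+qv=0$ on $\partial\Omega$ has a unique weak solution $v=:\mathcal{T}w$, because the bilinear form $a(v,\chi)=\int_\Omega\nabla v\cdot\nabla\chi+M\int_\Omega v\chi+\int_{\partial\Omega}qv\chi$ is bounded and coercive on $H^1(\Omega)$ (coercivity follows from $M,q>0$), so Lax--Milgram applies. The key tool is the following weak comparison principle: if $w\in H^1(\Omega)$ satisfies $a(w,\chi)\ge 0$ for all $0\le\chi\in H^1(\Omega)$, then $w\ge 0$. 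Indeed, testing with $\chi=w^-=\max\{-w,0\}$ gives $0\le a(w,w^-)=-\|\nabla w^-\|_2^2-M\|w^-\|_2^2-\int_{\partial\Omega}q(w^-)^2\le 0$, so $w^-\equiv 0$ by coercivity.

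With these in hand I would define the iteration $u_0=\phi$, $u_{n+1}=\mathcal{T}\big(F(\cdot,u_n)\big)$, and show by induction that $\psi\le u_{n+1}\le u_n\le\phi$. For the base case, the supersolution inequality reads $a(\phi,\chi)\ge\int_\Omega F(x,\phi)\chi$ for $\chi\ge 0$, while $a(u_1,\chi)=\int_\Omega F(x,\phi)\chi$ for all $\chi$, so $a(\phi-u_1,\chi)\ge 0$ and the comparison principle gives $u_1\le\phi$; the subsolution inequality gives $u_1\ge\psi$ symmetrically. For the inductive step, from $\psi\le u_n\le u_{n-1}$ the monotonicity of $F$ on $[m,\overline{M}]$ yields $F(\cdot,u_n)\le F(\cdot,u_{n-1})$, hence $a(u_n-u_{n+1},\chi)=\int_\Omega\big(F(x,u_{n-1})-F(x,u_n)\big)\chi\ge 0$ and $u_{n+1}\le u_n$, and a parallel comparison with the subsolution gives $u_{n+1}\ge\psi$.

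Finally I would pass to the limit. The sequence $\{u_n\}$ is monotone and trapped in $[m,\overline{M}]$, hence converges pointwise, and by dominated convergence in $L^2(\Omega)$ to some $u$ with $\psi\le u\le\phi$. Coercivity of $a$ together with the uniform $L^\infty$ (hence $L^2$) bound on $F(\cdot,u_n)$ furnishes a uniform $H^1(\Omega)$ bound; extracting a weakly convergent subsequence and using the continuity of $F$ lets me identify the limit as a weak solution of $-\Delta u+Mu=F(x,u)$, i.e.\ of \eqref{main}, in $H^1(\Omega)$. The step I expect to demand the most care is the weak comparison principle for the Robin operator, where the boundary integral must be tracked correctly in the variational formulation; once that is secured, the monotonicity of the iteration, the verification of the base case directly from the weak sub- and supersolution inequalities, and the final limit passage are all routine.
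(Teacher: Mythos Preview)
Your argument is correct. The paper does not actually prove this lemma: it states it and defers to \cite{Am}. What you have written is precisely the classical monotone iteration scheme underlying that reference, specialized to the Robin boundary condition and the particular nonlinearity here, so there is no substantive difference in approach to discuss.
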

\noindent {\bf Proof of Theorem \ref{T1}:}\\
\noindent {\bf Existence:} In the proof of Proposition \ref{existence_fH}, we have seen that there exists a $K^*>0$  such that for $K>K^*$ the function $f_{H}(s)=s-\frac{s^2}{K}-c\frac{s^2}{1+s^2}-Hs$ satisfies the hypothesis (A). Hence for $\lambda>\dfrac{\lambda_1(\Omega)}{f_{H}'(0)}=\dfrac{\lambda_1(\Omega)}{1-H}$, the boundary value problem \eqref{fH} has a positive solution $u_{H}$. It is easy to check that $u_{H}$ is a subsolution for \eqref{main} and $\Psi\equiv K$ is a supersolution for \eqref{main}. Hence by Lemma \ref{lemmasub}, \eqref{main} has a positive solution for $\lambda>\frac{\lambda_1(\Omega)}{1-H}$.

\noindent {\bf Uniqueness:} We will show that \eqref{main} has a unique positive solution for large $\lambda$. Let $u_h$ be any positive solution of \eqref{main}, then $u_h$ is a super solution to \eqref{fH}. By Proposition \ref{existence_fH}, we have  $u_{H}$ is the unique solution of \eqref{fH} for large values of the parameter $\lambda$. Thus $u_{H}\leq u_h$ for large $\lambda$. A similar argument can be used to prove that $u_h\leq u_0$
for $\lambda\gg 1$. Hence for $\lambda\gg 1$, any positive solutions $u_h$ of \eqref{main} satisfies
\begin{equation*}\label{main3}
u_{H}\leq u_h\leq u_0.
\end{equation*}
Choose $\delta>0$ such that $\frac{K(1-H)}{2}<r_0-\frac{\delta}{2}$, where $r_0$ is such that $f_H(r_0)=0$. Then by Proposition \ref{remark2}, there exists a $\lambda_{\delta}$ such that for $\lambda>\lambda_{\delta}$,
\begin{eqnarray}\label{123}
\frac{K(1-H)}{2}\leq u_{H}\leq u_{h}\leq u_{0}<K~~~~\rm{in}~~ \Omega.
\end{eqnarray}
Let $u_{m}$ be the maximal positive solution of \eqref{main} and $u_h$ be any positive solution of \eqref{main} different from $u_m$, then
\begin{align}\label{compare}
u_m\Delta u_h-u_h\Delta u_m
=\lambda[u_hf_h(u_m)-u_mf_h(u_h)]
=\lambda u_hu_{m}\left[\frac{f_{h}(u_m)}{u_m}-\frac{f_{h}(u_{h})}{u_{h}}\right].
\end{align}
Integrating \eqref{compare} over $\Omega$ we get
\begin{align*}
&\int_{\Omega}[u_{m}\Delta u_h-u_h\Delta u_{m}]~\rm{d}x=\lambda\int_{\Omega}u_hu_{m}\left[\frac{f_{h}(u_m)}{u_m}-\frac{f_{h}(u_{h})}{u_{h}}\right]~\rm{d}x\\\nonumber
&\leq \lambda K^2\int_{\Omega}\left[\frac{f_{h}(u_m)}{u_m}-\frac{f_{h}(u_{h})}{u_{h}}\right]~\rm{d}x<0.
\end{align*}
But
\begin{align*}
\int_{\Omega}[u_m\Delta u_h-u_h\Delta u_m]~\rm{d}x&=\int_{\Omega}\nabla u_h\cdot\nabla u_m~\rm{d}x-\int_{\Omega}\nabla u_h\cdot\nabla u_m~\rm{d}x-\int_{\partial\Omega}u_h\frac{\partial u_m}{\partial\eta}~\rm{d}s+\int_{\partial\Omega}u_m\frac{\partial u_h}{\partial\eta}~\rm{d}s\\
&=q\int_{\partial\Omega}u_hu_m~\rm{d}s-q\int_{\partial\Omega}u_hu_m~\rm{d}s=0
\end{align*}
This is a contradiction. Hence for $\lambda\gg 1$, \eqref{main} has a unique positive solution.
\qed
%%%%%%%%%%%%%%%%%%%%Existence of an optimal control%%%%%%%%%%%%%%%%%%%%%%%%%%%%%%%%%%
\section{Existence of an optimal control}\label{s3}
\noindent In this section, we prove the existence of an optimal control for our objective functional.

\noindent {\bf Proof of Theorem 1.2.}

 \noindent We have
 $$J(h)=\int_{\Omega}h(x)u(x)~\rm{d}x-\int_{\Omega}(B_1+B_2 h)h ~\rm{d}x\leq \|u\|_\infty H |\Omega|.$$
Let $\{h_n\}\subset U$ be any maximizing sequence for $J$. Consequently $\displaystyle{\lim_{n\to \infty}}J(h_n)=\displaystyle{\sup_{h\in U}} J(h)$.
Since $u_{h_n}$ is a solution of (\ref{main}) with $h=h_n$ we have
\begin{equation}\label{3.1}
  \int_{\Omega}\nabla u_{h_n}\cdot \nabla v~\rm{d}x+q\int_{\partial\Omega}u_{h_n}v~\rm{d}s=\lambda\int_{\Omega}\Big(u_{h_n}-\frac{u_{h_n}^2}{K}-c\frac{u_{h_n}^2}{1+u_{h_n}^2}-h_n u_{h_n}\Big)v~\rm{d}x.\\
\end{equation}
Taking $v=u_{h_n}$ in \eqref{3.1} and since $q>0$ we have 
\begin{eqnarray*}
 \int_{\Omega}|\nabla u_{h_n}|^2~\rm{d}x\leq \int_{\Omega}|\nabla u_{h_n}|^2~\rm{d}x +q\int_{\partial\Omega}u_{h_n}^2~\rm{d}s&=&\lambda\int_{\Omega}\Big(u_{h_n}-\frac{u_{h_n}^2}{K}-c\frac{u_{h_n}^2}{1+u_{h_n}^2}-h_n u_{h_n}\Big)u_{h_n}~\rm{d}x\\
 &\leq& \lambda \int_{\Omega}u_{h_n}^2~\rm{d}x\leq\lambda K^2|\Omega|,
\end{eqnarray*}
and hence $\|u_{h_n}\|_{H^1(\Omega)}\leq C$ for some constant $C>0$.
Thus there exists $u^*\in{H^1(\Omega)}$ such that a subsequence, which we denote by ${u_{h_n}}$ itself, converges weakly to
$u^*$ in ${H^1(\Omega)}$. Further $u_{h_n}\rightarrow u^*$ strongly in $L^2(\Omega)$ as ${H^1(\Omega)} \subset\subset L^2(\Omega)$. This subsequence can be
chosen such that the convergence is almost uniform. Since $\{h_n\}\subset U$ is uniformly bounded in $L^2(\Omega)$, $h_n$ converges weakly to some $h^*\in U$. Also, a simple integration by parts argument gives us $u_{h_n}\rightharpoonup u^*$ in $L^2(\partial\Omega)$.

\noindent Now we will prove that $u^*=u_{h^*}$. It is enough if we can show that
$$ \int_{\Omega}\nabla u^*\cdot \nabla v~\rm{d}x+\int_{\partial\Omega}qu^*v~\rm{d}s=\lambda\int_{\Omega}\Big(u^*-\frac{(u^*)^2}{K}-c\frac{(u^*)^2}{1+(u^*)^2}-h^* u^*\Big)v~\rm{d}x,\quad v\in{H^1(\Omega)}. $$
Since $u_{h_n}\rightharpoonup u^*$ in ${H^1(\Omega)}$, we get
\begin{equation*}
  \int_{\Omega}\left[\nabla u_{h_n}\cdot \nabla v-\nabla u^*\cdot \nabla v\right]~\rm{d}x= \int_{\Omega}(\nabla u_{h_n}-\nabla u^*)\cdot\nabla v~\rm{d}x\to 0.
\end{equation*}
Also as $u_{h_n}\rightharpoonup u^*$ in $L^2(\partial\Omega),$ 
\begin{eqnarray*}
     \int_{\partial\Omega}\left[ qu_{h_n}v-qu^*v \right] ~\rm{d}s&\leq& \Big(\int_{\partial \Omega}|(u_{h_n}-u^*)^2| ~\rm{d}s \Big)^{\frac{1}{2}}  \Big(\int_{\partial \Omega}|qv|^2 ~\rm{d}x\Big)^\frac{1}{2} ~\rm{d}s\\
 &\to&0 \mbox{~~as~} n\to\infty. 
\end{eqnarray*}
Now we will show that
$$\int_{\Omega}|\Big(u_{h_n}-\frac{u_{h_n}^2}{K}-c\frac{u_{h_n}^2}{1+u_{h_n}^2}-h_n u_{h_n}\Big)v-
\Big(u^*-\frac{(u^*)^2}{K}-c\frac{(u^*)^2}{1+(u^*)^2}-h^* u^*\Big)v|~\rm{d}x\to 0$$
as $n\to\infty$. We do this by grouping together
similar terms and showing that each converges to $0$ as $n\to\infty$.
\noindent As $u_{h_n}\to u^*$ in $L^2(\Omega)$ we get the following:
% \begin{eqnarray*}
%  \int_{\Omega}\abs{(
% \end{eqnarray*}
\begin{eqnarray*}
 \int_{\Omega}|(u_{h_n}-u^*)v| ~\rm{d}x&\leq& \Big(\int_{\Omega}|(u_{h_n}-u^*)^2| ~\rm{d}x \Big)^{\frac{1}{2}}  \Big(\int_{\Omega}|v|^2 ~\rm{d}x\Big)^\frac{1}{2} ~\rm{d}x\\
 &\to&0 \mbox{~~as~} n\to\infty.
\end{eqnarray*}
We can use this to further show that $\int_{\Omega}|(u_{h_n}^2-{u^*}^2)v| ~\rm{d}x\to 0$
as $n\to\infty$ and
% \begin{eqnarray*}
%  \int_{\Omega}\abs{(u_{h_n}^2-u_{h^*}^2)v} ~\rm{d}x&\leq& \int_{\Omega}\abs{u_{h_n}+u_{h^*}}\abs{(u_{h_n}-u_{h^*})v} ~\rm{d}x \\
% % &\leq& C \int_{\Omega}\abs{(u_{h_n}-u_{h^*})v} ~\rm{d}x \\
%  &\ra&0 \mbox{~~as~} n\ra\infty.
% \end{eqnarray*}
%and
\begin{eqnarray*}
\int_{\Omega}\Big| \Big(\frac{u_{h_n}^2}{1+u_{h_n}^2}-\frac{{u^*}^2}{1+{u^*}^2}\Big)v \Big|~\rm{d}x&=& \int_{\Omega}
\Big| \frac{(u_{h_n}^2-{u^*}^2)}{(1+u_{h_n}^2)(1+{u^*}^2)}v\Big|~\rm{d}x\\
&\leq&\int_{\Omega}|(u_{h_n}^2-{u^*}^2)v|~\rm{d}x\\
 &\to&0 \mbox{~~as~} n\to\infty.
\end{eqnarray*}
Finally
\begin{eqnarray*}
 |\int_{\Omega}\Big(h_n u_{h_n}-h^*u^*\Big)v| ~\rm{d}x&\leq&  |\int_{\Omega}h_n \Big(u_{h_n}-u^*\Big)v ~\rm{d}x|+
|\int_{\Omega}\Big(h_n -h^*\Big)u^*v~\rm{d}x|\\
&\leq&\int_{\Omega}H |\Big(u_{h_n}-u^*\Big)v| ~\rm{d}x+
|\int_{\Omega}\Big(h_n -h^*\Big)u^*v~\rm{d}x|\\
 &\to&0 \mbox{~~as~} n\to\infty
\end{eqnarray*}
Now we will prove that $h^*$ maximizes the functional $J(h)$ by showing that $\displaystyle{\sup_{h\in U}} J(h)\leq J(h^*)$. %In the arguments below we use the fact that
\begin{eqnarray*}
\displaystyle{\sup_{h\in U}} J(h)&=& \displaystyle{\lim_{n\to\infty}}J(h_n)\\
&=&\displaystyle{\lim_{n\to\infty}}\left(\int_{\Omega}h_n(x)u_{h_n}(x)~\rm{d}x-\int_{\Omega}(B_1+B_2 h_n)h_n ~\rm{d}x\right)\\
&\leq&\int_{\Omega}h^*u_{h^*}~\rm{d}x-\underline{\lim}_{n\to\infty}\int_{\Omega}(B_1+B_2 h_n)h_n~\rm{d}x\\
&\leq&\int_{\Omega}h^*u_{h^*}~\rm{d}x-\int_{\Omega}(B_1+B_2 h^*)h^*~\rm{d}x\\
&=&J(h^*).
\end{eqnarray*}
Thus we have shown that there exists a control $h^*$ which maximizes $J(h)$.\qed
%%%%%%%%%%%%%%%%%%%%%%%%%%%%%%%%%%%%%%%%%%%
\section{The Optimality System}\label{s4}
In this section, we deduce the optimality system for the functional $J$. We present some preliminary results which will be used in the sequel.

Consider the  eigenvalue  problem
\begin{equation}\label{eig-q}
  \begin{split}
  -\Delta u(x)+V(x)u(x) &=\sigma u(x) \quad \rm{in} ~\Omega,\\
  \frac{\partial u}{\partial\eta}+qu(x)&=0 \quad \rm{on}~ \Omega,
  \end{split}
\end{equation}
where $V(x)\in L^\infty(\Omega)$. 
The principal eigenvalue $\sigma_1(V)$ is characterized  by
\begin{equation*}
 \sigma_1 (V) =\inf_{\phi\in{H^1(\Omega)}\setminus \{0\}}\dfrac{\int_{\Omega}|\nabla \phi|^2~\rm{d}x+\int_{\Omega}V\phi^2~\rm{d}x+\int_{\partial \Omega}q\phi^2~\rm{d}s}{\int_{\Omega}|\phi|^2~\rm{d}x}.
\end{equation*}
 It is  known that $\sigma_1(V)$ is simple and the associated eigenfunctions
$\phi_V\in C^{1,\alpha}(\overline{\Omega}),~\alpha\in(0,1)$ (see Proposition 2.4 in \cite{Fragnelli2016}). Let $\phi_V$ be the normalized eigenfunction with $\|\phi_V\|_\infty=1$ .
The principle eigenvalue $\sigma_1(V)$ has the following properties:
\begin{itemize}
    \item[1] $\sigma_1(V)$ is increasing with respect to $V$, that is, if $V_1<V_2$, then $\sigma_1(V_1)<\sigma_2(V_2)$.
    \item[2] $\sigma_1(V)$ is continuous with respect to $V\in L^{\infty}(\Omega)$.
 \item[3] If $\sigma_1(V)>0$, we can find a constant  $M>0$ such that
\begin{equation}
\label{eig-q1}
 M|\phi|_{H^1(\Omega)^2}\leq \int_{\Omega}|\nabla \phi|^2~\rm{d}x+\int_{\Omega}V\phi^2~\rm{d}x+\int_{\partial\Omega}q\phi^2~\rm{d}s\quad \forall \phi\in H^1(\Omega).
\end{equation}
\end{itemize}
Establishing 1 and 2 is a straightforward exercise. Here we present a proof of property 3, part of which follows along the lines of the discussion in \cite{Canada98}, where a similar result is proved for the Dirichlet boundary case. 

Let $0<C<\frac{\sigma_1(V)}{\sigma_1(V)+\|V\|_{\infty}}$ that is $C\|V\|_{\infty}\leq (1-c)\sigma_1(V)$, then by the definition of $\sigma_1(V)$, we have for all $\phi\in H^1(\Omega)$
\begin{align*}
    (1-C)\int_{\Omega}(|\nabla\phi|^2+V\phi^2)~\rm{d}x+(1-C)q\int_{\partial\Omega}\phi^2~\rm{d}s&\geq(1-C)\sigma_1(V)\int_{\Omega}\phi^2~\rm{d}x\\
    &\geq C\|V\|_{\infty}\int_{\Omega}\phi^2~\rm{d}x\\
    &\geq-C\int_{\Omega}V\phi^2~\rm{d}x.
\end{align*}
As $q>0$, we have 
$$(1-C)\int_{\Omega}\left(|\nabla \phi|^2+V\phi^2\right)~\rm{d}x+q\int_{\partial\Omega}\phi^2~\rm{d}s\geq -C\int_{\Omega}V\phi^2~\rm{d}x.$$
Rearranging the above equation we have 
\begin{equation}\label{Bounded below by C}
C\int_{\Omega}|\nabla \phi|^2~\rm{d}x\leq \int_{\Omega}|\nabla \phi|^2~\rm{d}x+\int_{\Omega}V\phi^2~\rm{d}x+\int_{\partial\Omega} q \phi^2 ~\rm{d}s,\quad \forall \phi\in{H^1(\Omega)}.
\end{equation}

From the variational characterization of $\sigma_1(V),$ it follows that 
\begin{equation}
    \label{Bounded below by sigma_1}
     \sigma_1(V) \int_{\Omega} \phi^{2} ~\rm{d}x \leq \int_{\Omega} |\nabla \phi |^{2} ~\rm{d}x + \int_{\Omega} V \phi^{2} ~\rm{d}x + \int_{\partial \Omega} q \phi^{2} ~\rm{d}s,  \quad \forall \phi\in{H^1(\Omega)}.
\end{equation}
Set $M:= \frac{1}{2}\displaystyle{\inf} \{ C, \sigma_1(V) \}$.
Adding \eqref{Bounded below by C} and \eqref{Bounded below by sigma_1}, we get
\begin{equation*}
    M  |\phi|_{H^1(\Omega)}^2 \leq \int_{\Omega}|\nabla \phi|^2 ~\rm{d}x+\int_{\Omega}V\phi^2 ~\rm{d}x+
    \int_{\partial\Omega}q\phi^2 ~\rm{d}s,  \quad \forall \phi\in H^1(\Omega).
\end{equation*}

Next, we state a useful existence and uniqueness result.
\begin{lemma}
\label{lin-q-unique}
Assume that 
 $V\in L^\infty(\Omega)$, and $\sigma_1(V)>0.$ Then for each $f\in L^2(\Omega)$ the linear problem
\begin{equation}\label{lin-q}
  \begin{split}
  -\Delta u(x)+V(x)u(x) &= f(x) \quad x\in\Omega,\\
  \frac{\partial u}{\partial\eta}+qu(x)&=0 \quad x\in \partial \Omega,
  \end{split}
\end{equation}
has a unique solution $u\in {H^1(\Omega)}$.
\end{lemma}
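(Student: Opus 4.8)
The plan is to recast \eqref{lin-q} in weak form and invoke the Lax--Milgram theorem. First I would introduce the bilinear form $a:H^1(\Omega)\times H^1(\Omega)\to\mathbb{R}$ given by
\[
a(u,\phi)=\int_\Omega \nabla u\cdot\nabla\phi\,\mathrm{d}x+\int_\Omega Vu\phi\,\mathrm{d}x+\int_{\partial\Omega}qu\phi\,\mathrm{d}s,
\]
together with the linear functional $L(\phi)=\int_\Omega f\phi\,\mathrm{d}x$. Testing \eqref{lin-q} against $\phi\in H^1(\Omega)$ and integrating by parts (using the Robin condition to convert the normal-derivative boundary integral into the term $\int_{\partial\Omega}qu\phi\,\mathrm{d}s$) shows that $u\in H^1(\Omega)$ solves \eqref{lin-q} weakly exactly when $a(u,\phi)=L(\phi)$ for every $\phi\in H^1(\Omega)$. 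It therefore suffices to verify the three hypotheses of Lax--Milgram: continuity of $L$, continuity of $a$, and coercivity of $a$.

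Continuity of $L$ is immediate from the Cauchy--Schwarz inequality, since $|L(\phi)|\le\|f\|_{L^2(\Omega)}\|\phi\|_{H^1(\Omega)}$. For the continuity of $a$ I would estimate the three terms separately: the gradient and potential terms are controlled by Cauchy--Schwarz together with $V\in L^\infty(\Omega)$, while the boundary term is handled using the trace inequality $\|\phi\|_{L^2(\partial\Omega)}\le C_{\mathrm{tr}}\|\phi\|_{H^1(\Omega)}$. Combining these gives a bound of the form $|a(u,\phi)|\le\bigl(1+\|V\|_\infty+qC_{\mathrm{tr}}^2\bigr)\|u\|_{H^1(\Omega)}\|\phi\|_{H^1(\Omega)}$, which is the required continuity.

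The crucial ingredient is coercivity, and this is precisely where the standing hypothesis $\sigma_1(V)>0$ is used. By property 3 established above, namely the estimate \eqref{eig-q1}, there is a constant $M>0$ with $M\|\phi\|_{H^1(\Omega)}^2\le a(\phi,\phi)$ for all $\phi\in H^1(\Omega)$, which is exactly the coercivity of $a$. With continuity of $a$ and $L$ and coercivity of $a$ in hand, the Lax--Milgram theorem produces a unique $u\in H^1(\Omega)$ satisfying $a(u,\phi)=L(\phi)$ for all $\phi$, giving the unique weak solution.

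The only genuine obstacle here is coercivity, and it has in effect already been disposed of: the sign condition $\sigma_1(V)>0$ was introduced so that \eqref{eig-q1} holds, which converts a form that is a priori indefinite (because $V$ may be negative on part of $\Omega$) into a coercive one. Everything else is routine, and the sole technical point worth flagging explicitly is the appearance of the trace theorem in handling the boundary term, which is the one feature absent from the corresponding Dirichlet argument.
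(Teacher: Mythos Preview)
Your proposal is correct and follows essentially the same approach as the paper: define the natural bilinear form incorporating the Robin boundary term, verify boundedness via H\"older and the trace inequality, invoke property~3 (equation \eqref{eig-q1}) for coercivity, and conclude by Lax--Milgram. The only cosmetic difference is that the paper also explicitly checks continuity of the linear functional, which you do as well.
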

\begin{proof}   
Consider the bilinear form $B:H^1(\Omega) \times H^1(\Omega) \to \mathbb{R}$ defined by 
\begin{equation*}
    B[u,v] = \int_{\Omega} \nabla u \cdot \nabla v ~\rm{d}x +\int_{\Omega} V(x)uv ~\rm{d}x + \int_{\partial \Omega} q uv ~\rm{d}s. 
\end{equation*}
Then using H\"older's inequality, we have 
\begin{equation*}
    \begin{aligned}
         \left|B[u,v] \right| & \leq \left\|\nabla u \right\|_{L^2(\Omega)}\left\|\nabla v \right\|_{L^2(\Omega)}+\left\|V \right\|_\infty\left\|u \right\|_{L^2(\Omega)}\left\|v \right\|_{L^2(\Omega)}+q\left\|u \right\|_{L^2(\partial\Omega)}\left\|v \right\|_{L^2(\partial \Omega)} \\
         & \leq \left\|\nabla u \right\|_{L^2(\Omega)}\left\|\nabla v \right\|_{L^2(\Omega)}+\left\|V \right\|_\infty\left\|u \right\|_{L^2(\Omega)}\left\|v \right\|_{L^2(\Omega)}+qK\left\|u \right\|_{H^1(\Omega)}\left\|v \right\|_{H^1( \Omega)}, \text{ (using trace inequality)} \\
         & \leq  (1+ \left\|V\right\|_\infty+qK) \left\|u \right\|_{H^1(\Omega)}\left\|v \right\|_{H^1( \Omega)}.
    \end{aligned}
\end{equation*}
Also, by property 3 for $\sigma_1(V)$ above, we know, for all $u\in H^1(\Omega),$
\begin{equation*}
    M \left\|u \right\|_{H^1(\Omega)}^2 \leq B[u,u]. 
\end{equation*}
% \begin{equation}
%     \label{Bounded below by C}
%     B[u,u] = \int_{\Omega} |\nabla u |^{2} ~\rm{d}x + \int_{\Omega} Vu^{2} ~\rm{d}x + \int_{\partial \Omega} q u^{2} ~\rm{d}s \geq C \int_{\Omega} |\nabla u |^{2} ~\rm{d}x. 
% \end{equation}
% From the variational characterization of $\sigma_1(V),$ it follows that 
% \begin{equation}
%     \label{Bounded below by sigma_1}
%     B[u,u] = \int_{\Omega} |\nabla u |^{2} ~\rm{d}x + \int_{\Omega} Vu^{2} ~\rm{d}x + \int_{\partial \Omega} q u^{2} ~\rm{d}s \geq \sigma_1(V) \int_{\Omega} u^{2} ~\rm{d}x. 
% \end{equation}
% Adding \eqref{Bounded below by C} and \eqref{Bounded below by sigma_1}, we get
% \begin{equation*}
%     B[u,u] \geq \frac{1}{2} \displaystyle{\inf \left\{ C, \sigma_1(V) \right\} \left\|u \right\|_{H^1(\Omega)}^2.}
% \end{equation*}
Let us define a bounded linear function $\phi : H^1(\Omega) \to \mathbb{R}$ by $\phi(v)= \int_{\Omega}  fv ~\rm{d}x.$ Then by the Lax Milgram theorem there exists a unique $u\in H^1(\Omega)$ such that $B[u,v]=\phi(v)$ for all $v \in H^1(\Omega).$
\end{proof}
 
\noindent In \cite{Canada98} and \cite{D-Lenhart2009}, the authors use the monotonicity of $\sigma_1(V)$ to obtain certain existence results while deriving the optimality system. The introduction of the grazing terms in the nonlinearity makes it impossible to employ a similar argument in our case. The next lemma will help us to arrive at the optimality system.

\begin{lemma}\label{newlemma}
Let $c<2(1-H)$, $K\gg 1$, and $u_g$, $u_h$ be solutions of \eqref{main} corresponding to $h,g\in U$ respectively. Then there exists $\lambda_{\delta}>0$ such that for $\lambda>\lambda_{\delta}$, the principal eigenvalue $\sigma_1\Big(\lambda\Big(-1+h(x)+\frac{u_h+u_g}{K}+c\frac{u_h+u_g}{(1+u_h^2)(1+u_{g}^2)}\Big)\Big)$ of \eqref{eig-q} is positive.
\end{lemma}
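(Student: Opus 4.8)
The plan is to combine the variational characterization of $\sigma_1$ with its monotonicity (property 1) in order to reduce the statement to a pointwise sign condition on the potential. Write $V(x)=\lambda W(x)$ with
\[
W(x) = -1 + h(x) + \frac{u_h+u_g}{K} + c\,\frac{u_h+u_g}{(1+u_h^2)(1+u_g^2)}.
\]
If I can show $W\ge 0$ a.e.\ in $\Omega$, then $V\ge 0$, and by the monotonicity of $\sigma_1$ together with the fact that the Robin principal eigenvalue with zero potential is strictly positive (since $q>0$), I obtain $\sigma_1(V)\ge \sigma_1(0)>0$. Conversely, because $V$ carries the large factor $\lambda$, if $W$ were negative on a set of positive measure one could drive the Rayleigh quotient to $-\infty$ by concentrating a test function there; so $W\ge 0$ a.e.\ is genuinely the heart of the matter.

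The next step is an algebraic observation that exposes the structure of $W$. Writing $f_\alpha(s)=s-\frac{s^2}{K}-c\frac{s^2}{1+s^2}-\alpha s$, a direct computation gives the identity
\[
\frac{f_{h(x)}(u_h)-f_{h(x)}(u_g)}{u_h-u_g} = 1 - h(x) - \frac{u_h+u_g}{K} - c\,\frac{u_h+u_g}{(1+u_h^2)(1+u_g^2)} = -W(x),
\]
(with the right-hand side read as $-f_{h(x)}'(u_h)$ when $u_h=u_g$). Thus $W(x)\ge 0$ is exactly the statement that the secant slope of $f_{h(x)}$ between the values $u_g(x)$ and $u_h(x)$ is non-positive, and everything reduces to locating $u_h(x)$ and $u_g(x)$ on the graph of $f_{h(x)}$.

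The decisive input is a pointwise lower bound asserting that, for large $\lambda$, each solution dominates its local equilibrium: $u_h(x)\ge r_0^{h(x)}-o(K)$ and $u_g(x)\ge r_0^{g(x)}-o(K)$, where $r_0^\alpha$ is the unique positive root of $f_\alpha$ from Proposition \ref{existence_fH}. The intuition is the singular-perturbation behaviour of $-\Delta u=\lambda f_h(u)$: as $\lambda\to\infty$ the diffusion becomes negligible and $u_h$ tracks the stable zero $r_0^{h(x)}$ of the local reaction term. I would try to make this rigorous via the maximum principle --- at an interior minimum $x_0$ of $u_h$ one has $f_{h(x_0)}(u_h(x_0))\le 0$, hence $u_h(x_0)\ge r_0^{h(x_0)}$ --- together with the global pinch $u_H\le u_h\le u_0$ and the uniform estimate $u_H>r_0-\frac{\delta}{2}$ from Proposition \ref{remark2} to control the boundary. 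Granting this, and using $r_0^\alpha\to K(1-\alpha)$ for large $K$ (the grazing contribution being $O(1/K)$), the secant computation collapses cleanly: with $u_h\approx r_0^{h}$ one has $f_{h}(u_h)\approx 0$ and $f_{h}(r_0^{g})=r_0^{g}(g-h)$, so
\[
W(x) \approx \frac{f_{h}(u_h)-f_{h}(u_g)}{-(u_h-u_g)} \approx \frac{r_0^{g}(g-h)}{K(g-h)} = \frac{r_0^{g}}{K} \approx 1-g(x) \ge 1-H>0.
\]
Choosing $\lambda$ large enough to absorb the error terms then yields $W\ge \tfrac12(1-H)>0$ a.e., which closes the argument.

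The hard part is precisely the pointwise lower bound $u_h(x)\gtrsim r_0^{h(x)}$. Unlike in \cite{Canada98} and \cite{D-Lenhart2009}, the monotonicity of $\sigma_1$ in the coefficient cannot be exploited to sidestep it, and the global bound $u_H\le u_h$ alone is insufficient: for $H>\tfrac12$ it permits $u_h,u_g$ to sit on the increasing branch of $f_{h(x)}$, where the secant slope can be positive and $W$ negative. Making the localization uniform is delicate because $h$ is only an $L^\infty$ control, so $r_0^{h(x)}$ is merely bounded measurable and cannot be used directly as a smooth subsolution; I expect to need either an approximation of $h$ by continuous controls, or a boundary-layer analysis to handle the Robin condition, where the infimum of $u_h$ may be attained.
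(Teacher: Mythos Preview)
Your proposal contains a genuine gap, and it stems from aiming at a stronger statement than is needed. You try to show $W\ge 0$ pointwise, but with only the crude bounds $\frac{K(1-H)}{2}\le u_h,u_g\le K$ available from Proposition~\ref{remark2}, this can fail: taking $u_h=u_g=\frac{K(1-H)}{2}$ and $h(x)=0$ gives $W=-1+(1-H)+O(K^{-3})\approx -H<0$ for large $K$. You correctly diagnose that a sharper pointwise localization $u_h(x)\approx r_0^{h(x)}$ would rescue the argument, and you correctly flag that this is delicate for merely measurable $h$; but you do not prove it, and the paper neither proves nor needs it. Your heuristic computation with the cancellation of $(g-h)$ is also formal and breaks down when $g=h$ on a set of positive measure.

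The paper sidesteps all of this by comparing $V_2:=\lambda W$ not to $0$ but to
\[
V_1:=\lambda\Big(-1+h(x)+\frac{u_h}{K}+c\,\frac{u_h}{1+u_h^2}\Big).
\]
The key observation you missed is that the state equation itself, divided by $u_h$, reads $-\Delta u_h+V_1 u_h=0$; since $u_h>0$ in $\overline{\Omega}$, it is the principal eigenfunction and hence $\sigma_1(V_1)=0$ for free. One then only needs $V_2-V_1>0$ pointwise, and a direct computation gives
\[
V_2-V_1=\lambda\, u_g\Big(\frac{1}{K}+c\,\frac{1-u_hu_g}{(1+u_h^2)(1+u_g^2)}\Big),
\]
which is bounded below by $\lambda\alpha_0$ for some $\alpha_0>0$ once $K$ is large (the grazing correction is $O(K^{-2})$ against the $1/K$ term). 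The variational characterization then yields $\sigma_1(V_2)\ge\sigma_1(V_1)+\lambda\alpha_0\int_\Omega\phi_{V_2}^2>0$. Your secant-slope identity for $W$ is a nice structural observation, but the equation already hands you a reference potential with principal eigenvalue exactly zero, making the comparison elementary and the localization unnecessary.
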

\begin{proof}
Since $u_h$ is a solution of \eqref{main}, we have
\begin{equation*}
-\Delta u_h+\lambda\left(-1+h(x)+\frac{u_h}{K}+c\frac{u_h}{1+u_h^2}\right)u_h=0.
\end{equation*}
This implies that $\sigma_1\big(\lambda(-1+h(x)+\frac{u_h}{K}+c\frac{u_h}{1+u_h^2})\big)=0$.
Let
\begin{align*}
&V_1=\lambda\Big(-1+h(x)+\frac{u_h}{K}+c\frac{u_{h}}{1+u_{h}^2}\Big),\\
&V_2=\lambda\Big(-1+h(x)+\frac{u_h+u_g}{K}+c\frac{u_h+u_g}{(1+u_h^2)(1+u_{g}^2)}\Big).
\end{align*}
Hence
\begin{align*}
V_2-V_1 &= \lambda\Big(\frac{u_g}{K}+c\frac{u_{h}+u_g-u_{h}(1+u_g^2)}{(1+u_g^2)(1+u_{h}^2)}\Big)\nonumber\\
&=\lambda u_g\Big(\frac{1}{K}+c\frac{(1-u_{h}u_g)}{(1+u_g^2)(1+u_{h}^2)}\Big).
\end{align*}
To estimate $V_2-V_1$ we define $g\colon \mathbb{R} \to \mathbb{R}$ as,
\begin{equation*}
g(x)= \frac{1}{x}+\frac{c(1-x^2)}{(1+x^2)^2}.
\end{equation*}
Let $x_{0}$ be the smallest positive number such that $g(x)>0$, for $x>x_{0}$. Choose $K>\bar{K}=\max\{x_{0},\frac{8}{(1-H)}\}$ and $\alpha_0=\frac{K(1-H)}{2}g(K)$. Let $\delta>0$ be such that $\frac{K(1-H)}{2}<r_0-\frac{\delta}{2}$. Then by \eqref{123}, for $\lambda>\lambda_{\delta}$ we have the following estimate for $V_2-V_1$ in $\Omega$,
\begin{align*}
V_2-V_1 &=\lambda u_g\Big(\frac{1}{K}+c\frac{(1-u_g u_{h})}{(1+u_g^2)(1+u_{h}^2)}\Big)\\\nonumber
&>\lambda\frac{K(1-H)}{2}\Big(\frac{1}{K}+\frac{c(1-K^2)}{(1+K^2)^2}\Big)\\
&=\lambda\frac{K(1-H)}{2}g(K)=\lambda\alpha_0.
\end{align*}
 %In $\Omega_{\delta}^c$ we have,
%\begin{align*}
%q_{2}-q_{1}&=\lambda u_g\left(\frac{1}{K}+c\frac{(1-u_{h}u_g)}{(1+u_g^2)(1+u_{h}^2)}\right)\\
%&>\lambda u_g\Big(\frac{1}{K}+c\Big(1-K^2\Big)\Big)
%\\&>-\lambda u_g cK^2
%\\&>-\lambda c K^3=-\lambda\beta_0,
%\end{align*}
%where $cK^3=\beta_0>0$.
To show that $\sigma_1(V_2)>0$, we will first show that $\sigma_1(V_2)>\sigma_1(V_1)$. Now, $\sigma_1(V_2)$ has the following characterization 
\begin{align*}
\sigma_1 (V_2) =\inf_{\phi\in{H^1(\Omega)}\setminus \{0\}}\dfrac{\int_{\Omega}|\nabla \phi|^2~\rm{d}x+\int_{\Omega}V_2\phi^2~\rm{d}x+\int_{\partial\Omega}q\phi^2~\rm{d}s}{\int_{\Omega}|\phi|^2~\rm{d}x},
\end{align*}
Let $\phi_{V_2}$ be the corresponding eigenfunction with $\|\phi_{V_2}\|_{L^2}=1$. Then
\begin{align*}
\sigma_1(V_2)&=\int_{\Omega}|\nabla \phi_{V_2}|^2 ~\rm{d}x+ \int_{\Omega}(V_2-V_1)\phi_{V_2}^2~\rm{d}x+\int_{\Omega}V_1\phi_{V_2}^2~\rm{d}x+\int_{\partial\Omega}q\phi^2_{V_2}~\rm{d}s\\
&\geq{\sigma_1(V_1)}+ \int_{\Omega}(V_2-V_1)\phi_{V_2}^2~\rm{d}x.
\end{align*}
Thus
\begin{align}
\sigma_1(V_2)-\sigma_1(V_1)\geq\int_{\Omega}(V_2-V_1)\phi_{V_2}^2~\rm{d}x&=\int_{\Omega}(V_2-V_1)\phi_{V_2}^2~\rm{d}x
\geq \lambda\alpha_0\int_{\Omega}\phi_{V_2}^2~\rm{d}x>0.\label{124}
\end{align}
%Let $\delta>0$ be such that $\alpha_0\int_{\Omega_{\delta}}\phi_{V_2}^2-\beta_0\int_{\Omega^c_{\delta_1}}\phi_{V_2}^2>0$. From \eqref{124}, for $\lambda>\lambda_{\delta}$,
%\begin{align*}\label{125}
%\sigma_1(V_2)-\sigma_1(V_1)&\geq \lambda\Big[\alpha_0\int_{\Omega_{\delta}}\phi_{V_2}^2~\rm{d}x-\beta_0\int_{\Omega_{\delta}^c}\phi_{V_2}^2~\rm{d}x\Big]\\
%&\geq \lambda\Big[\alpha_0\int_{\Omega_{\delta}}\phi_{V_2}^2~\rm{d}x-\beta_0\int_{\Omega_{\delta}^c}\phi_{V_2}^2~\rm{d}x\Big]>0
%\end{align*}
Hence $\sigma_1(V_2)>0$ as $\sigma_1(V_1)=0$.
\end{proof}
\noindent In order to characterize the optimal control we examine the differentiability of the mapping $h\to u_h$, whose derivative is called the sensitivity. We say that the mapping $h\mapsto u_h$
is differentiable at $h$ if there exists $\psi\in{H^1(\Omega)}$, such that
$$\frac{u_{h+\gamma\epsilon}-u_{h}}{\epsilon}\rightharpoonup \psi \mbox{~weakly in~${H^1(\Omega)}$ as~}\epsilon\to 0,$$
where $\gamma\in L^\infty(\Omega)$ with $h+\epsilon \gamma\in U$.
\begin{lemma}
Let $c<2(1-H)$ and $K>{K^*}$, then for $\lambda\geq \lambda_{\delta}$ there exists a $\psi\in{H^1(\Omega)}$ such that
$\frac{u_{h+\epsilon\gamma}-u_{h}}{\epsilon}\rightharpoonup \psi$  weakly in ${H^1(\Omega)}$ as $\epsilon \to 0$ for any $\gamma\in L^\infty(\Omega)$. Further $\psi$
is the unique solution of the linear problem
\begin{equation}\label{eqn:sensitivity}
\left\{
  \begin{split}
  -\Delta \psi &=\lambda \Big(\psi-\frac{2u_h}{K}\psi-2c\frac{u_h}{(1+u_h^2)^2}\psi-h\psi-\gamma u_h\Big)\quad \mbox{in~} \Omega,\\
  \frac{\partial \psi}{\partial\eta}&+q\psi=0 \quad \mbox{on~} \partial \Omega.
  \end{split}
  \right.
\end{equation}
\end{lemma}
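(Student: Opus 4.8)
The plan is to linearize \eqref{main} in the direction $\gamma$, recognize the difference quotient as the solution of an $\epsilon$-perturbed version of \eqref{eqn:sensitivity}, and pass to the limit. Write $v_\epsilon := u_{h+\epsilon\gamma}$ and $w_\epsilon := \frac{v_\epsilon-u_h}{\epsilon}$. Subtracting the weak forms of \eqref{main} for $h+\epsilon\gamma$ and for $h$, dividing by $\epsilon$, and using the identities $\frac{v_\epsilon^2}{1+v_\epsilon^2}-\frac{u_h^2}{1+u_h^2}=\frac{(v_\epsilon-u_h)(v_\epsilon+u_h)}{(1+v_\epsilon^2)(1+u_h^2)}$ and $(h+\epsilon\gamma)v_\epsilon-hu_h=h(v_\epsilon-u_h)+\epsilon\gamma v_\epsilon$, one finds that $w_\epsilon$ is the weak solution of
\[
-\Delta w_\epsilon+V_2^{(\epsilon)}w_\epsilon=-\lambda\gamma v_\epsilon \ \text{ in } \Omega, \qquad \frac{\partial w_\epsilon}{\partial\eta}+qw_\epsilon=0 \ \text{ on } \partial\Omega,
\]
where $V_2^{(\epsilon)}=\lambda\Big(-1+h+\frac{u_h+v_\epsilon}{K}+c\frac{u_h+v_\epsilon}{(1+u_h^2)(1+v_\epsilon^2)}\Big)$ is precisely the potential $V_2$ of Lemma \ref{newlemma} for the choice $g=h+\epsilon\gamma$, $u_g=v_\epsilon$.

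The heart of the argument is a uniform (in $\epsilon$) bound on $\|w_\epsilon\|_{H^1(\Omega)}$. By Lemma \ref{newlemma} we have $\sigma_1(V_2^{(\epsilon)})>0$; in fact its proof yields the $\epsilon$-independent lower bound $\sigma_1(V_2^{(\epsilon)})\geq\lambda\alpha_0$, since that estimate uses only the a priori bounds \eqref{123}, which hold uniformly for every admissible control. Moreover $\|V_2^{(\epsilon)}\|_\infty$ is bounded independently of $\epsilon$ because $0\leq h\leq H$ and $\frac{K(1-H)}{2}\leq u_h,v_\epsilon<K$. Hence the coercivity constant $M$ from property $3$ (inequality \eqref{eig-q1}) may be taken independent of $\epsilon$. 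Testing the equation for $w_\epsilon$ with $w_\epsilon$ and invoking this coercivity gives
\[
M\|w_\epsilon\|_{H^1(\Omega)}^2\leq B[w_\epsilon,w_\epsilon]=-\lambda\int_\Omega\gamma v_\epsilon w_\epsilon\,dx\leq \lambda\|\gamma\|_\infty K|\Omega|^{1/2}\|w_\epsilon\|_{H^1(\Omega)},
\]
so that $\|w_\epsilon\|_{H^1(\Omega)}\leq C$ with $C$ independent of $\epsilon$. I expect this uniformity---reconciling the $\epsilon$-dependence of the potential $V_2^{(\epsilon)}$ with a single coercivity constant---to be the main technical point.

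With the uniform bound in hand, extract a subsequence such that $w_\epsilon\rightharpoonup\psi$ in $H^1(\Omega)$, $w_\epsilon\to\psi$ strongly in $L^2(\Omega)$ and, via the trace, weakly in $L^2(\partial\Omega)$. To pass to the limit in the weak formulation I first record that $v_\epsilon\to u_h$ as $\epsilon\to0$, strongly in $L^2(\Omega)$ and boundedly a.e.; this is the continuity of the map $h\mapsto u_h$ and follows from the weak-compactness argument used in the proof of Theorem \ref{T2} together with the uniqueness from Theorem \ref{T1}. Then $V_2^{(\epsilon)}\to V_2^{(0)}:=\lambda\Big(-1+h+\frac{2u_h}{K}+\frac{2cu_h}{(1+u_h^2)^2}\Big)$, and passing to the limit term by term shows that $\psi$ solves \eqref{eqn:sensitivity}. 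Finally, since $\sigma_1(V_2^{(0)})>0$ (the $g=h$ case of Lemma \ref{newlemma}), Lemma \ref{lin-q-unique} applies with potential $V_2^{(0)}$ and right-hand side $-\lambda\gamma u_h\in L^2(\Omega)$, so \eqref{eqn:sensitivity} has a unique solution $\psi\in H^1(\Omega)$. Because every weak limit point of $\{w_\epsilon\}$ solves this uniquely solvable problem, the whole family converges, $w_\epsilon\rightharpoonup\psi$, which is the assertion of the lemma.
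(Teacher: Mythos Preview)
Your proof is correct and follows essentially the same approach as the paper: subtract the two state equations, identify the difference quotient as solving a linear problem with the potential $V_2^{(\epsilon)}$ from Lemma~\ref{newlemma}, use the resulting coercivity to obtain a uniform $H^1$ bound, and pass to the limit using Lemma~\ref{lin-q-unique} for uniqueness. If anything, you are more careful than the paper in arguing that the coercivity constant $M$ from \eqref{eig-q1} can be taken independent of $\epsilon$ (via the uniform lower bound $\sigma_1(V_2^{(\epsilon)})\geq\lambda\alpha_0$ and the uniform $L^\infty$ bound on $V_2^{(\epsilon)}$), and in noting that uniqueness of the limit upgrades subsequential convergence to full convergence; note also that once $\|w_\epsilon\|_{H^1}\leq C$ is established, the convergence $v_\epsilon\to u_h$ in $L^2$ follows immediately from $v_\epsilon-u_h=\epsilon w_\epsilon$, so no separate continuity argument is needed.
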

\begin{proof}
We have
\begin{equation}\label{eq:uep}
\left\{
\begin{aligned}
 -\Delta u_{h+\epsilon\gamma}&=\lambda\left(u_{h+\epsilon\gamma}-\frac{u_{h+\epsilon \gamma}^2}{K}-c\frac{u_{h+\epsilon \gamma}^2}{1+u_{h+\epsilon \gamma}^2}-(h+\epsilon \gamma)u_{h+\epsilon \gamma}\right).\\
 \frac{\partial u_{h+\epsilon \gamma}}{\partial \eta}+&qu_{h+\epsilon\gamma}=0
 \end{aligned}
 \right.
\end{equation}
and
\begin{equation}\label{eq:uep1}
\left\{
\begin{split}
-\Delta u_h=&\lambda\left(u_h-\frac{u_h^2}{K}-c\frac{u_h^2}{1+u_h^2}-hu_h\right),\\
\frac{\partial u_h}{\partial\eta}&+qu_{h}=0.
\end{split}
\right.
\end{equation}
Subtracting \eqref{eq:uep1} from \eqref{eq:uep} and dividing by $\epsilon$, we have
% \begin{eqnarray}
%  \label{derivePsi}
%  -\Delta\frac{u_{h+\epsilon l}-u_h}{\epsilon}=\lambda\Big(\frac{u_{h+\epsilon l}-u_h}{\epsilon}-\frac1K\frac{u_{h+\epsilon l}^2-u_{h}^2}{\eps}-c(\frac{u_{h+\epsilon l}^2}{1+u_{h+\epsilon l}^2}-\frac{u_{h}^2}{1+u_{h}^2})\\
%  -h(x)\frac{u_{h+\epsilon l}-u_h}{\epsilon}-lu_{h+\epsilon l} \Big)
% \end{eqnarray}
\begin{equation}\label{weak}
\left\{
\begin{split}
 -\Delta\frac{u_{h+\epsilon \gamma}-u_h}{\epsilon}&=\lambda\Big(\frac{u_{h+\epsilon \gamma}-u_h}{\epsilon}-\frac{1}{K}\frac{u_{h+\epsilon \gamma}^2-u_{h}^2}{\epsilon}-c\frac{u_{h+\epsilon \gamma}+u_{h}}{(1+u_{h+\epsilon \gamma}^2)(1+u_{h}^2)}\frac{u_{h+\epsilon\gamma}-u_h}{\epsilon}\\
 &-h(x)\frac{u_{h+\epsilon \gamma}-u_h}{\epsilon}-\gamma u_{h+\epsilon \gamma} \Big),\nonumber\\
 \frac{\partial}{\partial\eta}(\frac{u_{h+\epsilon \gamma}-u_h}{\epsilon})&+q\frac{u_{h+\epsilon \gamma}-u_h}{\epsilon}=0.
\end{split}
\right.
\end{equation}
Multiplying both sides by $\frac{u_{h+\epsilon \gamma}-u_h}{\epsilon}$ and integrating over $\Omega$, we obtain
\begin{equation}\label{weak form for Lemma 4.3}
\begin{split}
\int_{\Omega}|\nabla \frac{u_{h+\epsilon \gamma}-u_h}{\epsilon}|^2 ~\rm{d}x+q\int_{\partial\Omega}\left(\frac{u_{h+\epsilon \gamma}-u_h}{\epsilon}\right)^2~\rm{d}s+\\
\int_{\Omega}\lambda\Big(-1+\frac{u_{h+\epsilon\gamma}+u_h}{K}+
c\frac{u_{h+\epsilon\gamma}+u_h}{(1+u_{h+\epsilon \gamma}^2)(1+u_{h}^2)}+h(x)\Big) \Big(\frac{u_{h+\epsilon \gamma}-u_h}{\epsilon}\Big)^2 ~\rm{d}x\\=
\lambda\int_{\Omega}\gamma u_{h+\epsilon \gamma} \frac{u_{h+\epsilon l}-u_h}{\epsilon} ~\rm{d}x.
\end{split}
\end{equation}
From Lemma \ref{newlemma} we have $\sigma_1\left(\lambda(-1+h(x)+\frac{u_{h+\epsilon l}+u_h}{K}+c\frac{u_{h+\epsilon l}+u_h}{(1+u_{h+\epsilon l}^2)(1+u_{h}^2)})\right)>0$. Hence by  \eqref{eig-q1} there exists a constant $M>0$ such that
\begin{eqnarray*}
M  \left\| \frac{u_{h+\epsilon \gamma}-u_h}{\epsilon} \right\|_{H^1(\Omega)}^2 &\leq&\int_{\Omega}|\nabla \frac{u_{h+\epsilon \gamma}-u_h}{\epsilon}|^2~\rm{d}x +\int_{\Omega}\lambda\Big(-1+h+\frac{u_{h+\epsilon \gamma}+u_h}{K}\\
&+&c\frac{u_{h+\epsilon \gamma}+u_h}{(1+u_{h+\epsilon \gamma}^2)(1+u_{h}^2)}\Big) \Big(\frac{u_{h+\epsilon \gamma}-u_h}{\epsilon}\Big)^2 ~\rm{d}x + q\int_{\partial\Omega}\left(\frac{u_{h+\epsilon \gamma}-u_h}{\epsilon}\right)^2~\rm{d}s\\
&=&\lambda \int_{\Omega}\gamma~u_{h+\epsilon \gamma} \frac{u_{h+\epsilon \gamma}-u_h}{\epsilon} ~\rm{d}x\\
&\leq&\lambda \|u_{h+\epsilon \gamma}\|_\infty\Big(\int_{\Omega}\gamma^2~\rm{d}x\Big)^{\frac{1}{2}}\Big(\int_{\Omega}\Big[\frac{u_{h+\epsilon \gamma}-u_h}{\epsilon}\Big]^2\Big)^{\frac{1}{2}}~\rm{d}x\\
&\leq&\lambda \|u_{h+\epsilon \gamma}\|_\infty \Big(\int_{\Omega}\gamma^2~\rm{d}x\Big)^{\frac{1}{2}}\left\| \frac{u_{h+\epsilon \gamma}-u_h}{\epsilon} \right\|_{H^1(\Omega)} \\
&\leq&C_1 \left\| \frac{u_{h+\epsilon \gamma}-u_h}{\epsilon} \right\|_{H^1(\Omega)} ,
\end{eqnarray*}
where $C_1=\lambda\|u_{h+\epsilon l}\|_\infty \|\gamma\|_\infty\|\Omega\|^\frac{1}{2}$.
Dividing both sides by $\left\| \frac{u_{h+\epsilon \gamma}-u_h}{\epsilon} \right\|_{H^1(\Omega)}^2 $ we have  $\|\frac{u_{h+\epsilon \gamma}-u_h}{\epsilon}\|_{H^1(\Omega)}$ is uniformly bounded by $\frac{C_1}{M}$. Thus there exist $\psi\in H^1(\Omega)$ such that $\frac{u_{h+\epsilon \gamma}-u_h}{\epsilon}\rightharpoonup\psi$ in ${H^1(\Omega)}$. We also have $u_{h+\epsilon \gamma}\to u_h$ in $L^2(\Omega)$. Using \eqref{weak form for Lemma 4.3} we get $\psi$ satisfies \eqref{eqn:sensitivity}.  From Lemma \ref{newlemma} we have
$\sigma_1(\Big(\lambda(-1+h(x)+\frac{2u_h}{K}+\frac{2cu_h}{(1+u_{h}^2)^2})\Big)>0$.
Hence by Lemma \ref{lin-q-unique}, the linear problem  \eqref{eqn:sensitivity} has a unique solution.
\end{proof}
\noindent Next, we will prove Theorem 1.3.\\
{\bf{Proof of Theorem 1.3 :}}
 %We prove the existence of a unique solution to \eqref{eq:adjoint} by showing that $0$ is not an eigenvalue of $L p = \sigma p$ where $L=-\Delta+\lambda\Big(-1+\frac{2u_h}{K}+2c\frac{u_h}{(1+u_h^2)^2} +h(x) \Big)$. Consequently, we can employ Fredholm alternative to establish the existence of a unique solution $p\in H^2(\Omega)$ for  $Lv-0.v=h(x)$ for $h(x)\in L^2(\bar{\Omega})$. 
%By Lemma \eqref{newlemma}, $\sigma_1(-1+\frac{2u_h}{K}+2c\frac{u_h}{(1+u_h^2)^2} +h(x) )>0$. Thus $0$ is not an eigenvalue of $L$ and by Fredholm alternative, the adjoint problem \eqref{eq:adjoint} has a unique solution. 
By Theorem 2.4.2.7 from \cite{grisvard2011elliptic}, the adjoint problem \eqref{eq:adjoint} has a unique solution $p\in H^2(\Omega).$ 
Now we will derive the characterization of optimal control $h(x)$ in terms of the solution to the adjoint problem. 
This part of the proof follows along the lines of  \cite{D-Lenhart2009}.
Suppose $h(x)$ is an optimal control and $\gamma\in L^\infty(\Omega)$ is such that $h+\epsilon\gamma \in U$ for small $\epsilon>0$.
Then the derivative of $J(h)$ with respect to $h$ in the direction of $\gamma$ satisfies
\begin{eqnarray*}
 0&\geq&\displaystyle{\lim_{\epsilon\to 0^+}}\frac{J(h+\epsilon\gamma)-J(h)}{\epsilon} \nonumber\\
 &=&\displaystyle{\lim_{\epsilon\to 0^+}}\left[\int_{\Omega}\left(\frac{u_{h+\epsilon \gamma}-u_h}{\epsilon} h+\gamma u_{h+\epsilon \gamma}\right) ~\rm{d}x-\int_{\Omega}\left(B_1\gamma+B_2(2h\gamma+\epsilon \gamma^2)\right)~\rm{d}x\right]\nonumber\\
 &=&\int_{\Omega}[\psi h+\gamma u_h]~\rm{d}x-\int_{\Omega}\left(B_1\gamma+2h\gamma B_2\right)~\rm{d}x\nonumber\\
 &=&\int_{\Omega}\Big[\psi\left(-\Delta p -\lambda \Big(p-\frac{2u_hp}{K}-\frac{2cu_h p}{(1+u_h^2)^2}-h(x)p \Big)\right)+\gamma u_h\Big]~\rm{d}x-\int_{\Omega}\left(B_1\gamma+2B_2h\gamma \right)~\rm{d}x,\nonumber
\end{eqnarray*}
where $p$ is the solution to the adjoint problem \eqref{eq:adjoint}. By using \eqref{eqn:sensitivity} we further have
\begin{eqnarray*}
 0&\geq&\int_{\Omega}\Big[\nabla p \cdot\nabla \psi-\lambda \Big(\psi-\frac{2u_h\psi}{K}-2cu_h\frac{\psi}{(1+u_h^2)^2} -h(x)\psi \Big)p+\gamma u_h\Big]~\rm{d}x  + \int_{\partial \Omega} qp\psi ~\rm{d}s -
 \int_{\Omega}\left(B_1\gamma+2B_2h\gamma\right)~\rm{d}x\\
 &=&\int_{\Omega}\Big[-\lambda \gamma u_hp+\gamma u_h\Big]~\rm{d}x-\int_{\Omega}\left(B_1\gamma+2h\gamma B_2\right)~\rm{d}x\\
 &=&\int_{\Omega}\gamma\left(-\lambda u_hp+u_h-B_1-2B_2h\right)~\rm{d}x.
\end{eqnarray*}
Let $D=\{x\in \Omega : 0<h(x)<H\}$. Choosing variations $\gamma$ with support on $D$, the above inequality will be satisfied if and only if
$-\lambda u_hp+u_h-B_1-2B_2h=0$ on $D$. %On the set where $h=0$, choose $l\geq0$, which implies, $-u_hp+u_h-B_1-2B_2h\leq0$
Thus the characterization for optimal control can be given in compact form as follows,
\begin{equation}\label{control}
h =\min\Big\{H,\max\{0,\frac{u_h-\lambda pu_h-B_1}{2B_2}\}\Big\}.
\end{equation}
\qed
\begin{remark}
If $B_1=B_2=0$ in $J(h)$, then the optimal control is given by
$$
h(x) = \left\{
        \begin{array}{ll}
            0, & \quad \mbox{if~} p>1 \\
            H, &\quad \mbox{if~} p<1\\
            \frac{\lambda}{2,} & \quad \mbox{if~} p = 1.
        \end{array}
    \right.
$$
Please see Theorem 4.3 in  \cite{D-Lenhart2009} for details.
\end{remark} 
%%%%%%%%%%%%%%%%%%%%%%%%%%%%%%%%%%%%%%%%%%%%%%%%%%%%%%%%%%%
\section{Uniqueness of optimality system}\label{s5}
In this section, we prove that the optimality system has a unique solution. The proof is a modification of the arguments in Ding et al. \cite{D-Lenhart2009}. In order to prove the uniqueness, we need a bound for the adjoint $p$ in $L^{\infty}(\Omega)$ which depends only on $B_2$.  
 \begin{lemma}
Let $\Omega\subset\mathbb{R}^N$, where $N=2,3$ and $B_2\neq 0$. Suppose $u_{h},p,h$ is a solution of \eqref{os} with $u_{h}>0$ in $\Omega$, Then the adjoint $p$ satisfies 
\begin{equation}\label{U1}
\|p\|_{L^{\infty}(\Omega)}\leq\frac{M'}{B_2},
\end{equation} 
where $M'$ does not depend on $B_2$.
\end{lemma}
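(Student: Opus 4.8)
\noindent The plan is to rewrite the adjoint equation \eqref{eq:adjoint} as the linear Robin problem $-\Delta p+Vp=h$ with potential
\[
V:=\lambda\Big(-1+h(x)+\frac{2u_h}{K}+\frac{2cu_h}{(1+u_h^2)^2}\Big),
\]
to derive an $H^1$-bound on $p$ of order $1/B_2$ by testing the equation against $p$ itself, and then to bootstrap to $L^\infty$ using elliptic regularity together with the embedding $H^2(\Omega)\hookrightarrow L^\infty(\Omega)$, which is available precisely because $N\in\{2,3\}$.

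First I would record that $V$ is exactly the potential appearing in Lemma \ref{newlemma} in the case $g=h$ (so that $u_g=u_h$); hence $\sigma_1(V)>0$, and in fact the proof of that lemma gives $\sigma_1(V)\ge\lambda\alpha_0$, with $\alpha_0$ depending only on $K,H,c$. Since the a priori bounds \eqref{123} confine $u_h$ to $\big[\tfrac{K(1-H)}{2},K\big]$ and $0\le h\le H$, the quantity $\|V\|_\infty$ is bounded by a constant independent of $B_2$; consequently the coercivity constant $M$ furnished by \eqref{eig-q1} is bounded below independently of $B_2$, and the bilinear form $B[\phi,\psi]=\int_\Omega\nabla\phi\cdot\nabla\psi+\int_\Omega V\phi\psi+\int_{\partial\Omega}q\phi\psi$ satisfies $M\|p\|_{H^1(\Omega)}^2\le B[p,p]$.

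The crucial step is to extract the factor $1/B_2$. Testing the adjoint equation against $p$ gives $B[p,p]=\int_\Omega hp$. From the characterization \eqref{control} one has $0\le h\le\frac{1}{2B_2}\big(u_h-\lambda pu_h-B_1\big)^+\le\frac{1}{2B_2}\big(K+B_1+\lambda K|p|\big)$, using $u_h\le K$. Substituting and applying the Cauchy--Schwarz inequality yields
\[
M\|p\|_{H^1(\Omega)}^2\le \frac{K+B_1}{2B_2}\,|\Omega|^{1/2}\|p\|_{H^1(\Omega)}+\frac{\lambda K}{2B_2}\,\|p\|_{H^1(\Omega)}^2 .
\]
For $B_2$ large enough that $\frac{\lambda K}{2B_2}\le\frac{M}{2}$ the quadratic term is absorbed into the left-hand side, leaving $\|p\|_{H^1(\Omega)}\le C_1/B_2$ with $C_1$ independent of $B_2$. (For $B_2$ in a bounded range away from $0$ the claimed bound is immediate, since then $\|h\|_\infty\le H$ and the uniform coercivity already gives an $L^\infty$-estimate independent of $B_2$; so one may assume $B_2$ large, which is anyway the regime relevant to Theorem \ref{T4}.)

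Finally I would bootstrap. Writing $-\Delta p=h-Vp=:F$ and using $\|V\|_\infty\le L$ (independent of $B_2$) together with the $H^1$-bound and $\|h\|_{L^2}\le\frac{1}{2B_2}\big(K+B_1+\lambda K\|p\|_{L^2}\big)|\Omega|^{1/2}$, one gets $\|F\|_{L^2(\Omega)}\le C_2/B_2$. Elliptic regularity for the Robin problem (cf. \cite{grisvard2011elliptic}) then gives $\|p\|_{H^2(\Omega)}\le C\big(\|F\|_{L^2(\Omega)}+\|p\|_{L^2(\Omega)}\big)\le C_3/B_2$, and since $N\in\{2,3\}$ the embedding $H^2(\Omega)\hookrightarrow L^\infty(\Omega)$ yields $\|p\|_{L^\infty(\Omega)}\le M'/B_2$ with $M'$ independent of $B_2$. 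I expect the main obstacle to be the third paragraph: one must exploit the constrained, $B_2$-dependent form of $h$ in \eqref{control} to manufacture the $1/B_2$ factor, and simultaneously verify that every constant ($M$, $L$, and the lower bound for $\sigma_1(V)$) is independent of $B_2$ — which is exactly what the uniform bounds \eqref{123} and Lemma \ref{newlemma} guarantee. The absorption forces the large-$B_2$ regime, while the restriction $N\le 3$ enters only at the final Sobolev step.
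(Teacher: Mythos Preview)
Your proposal is correct and follows the same overall architecture as the paper: identify the adjoint equation as $-\Delta p+Vp=h$ with $V=\lambda(-1+h+\tfrac{2u_h}{K}+\tfrac{2cu_h}{(1+u_h^2)^2})$, invoke Lemma~\ref{newlemma} (with $g=h$) to get $\sigma_1(V)>0$ and hence the coercivity \eqref{eig-q1}, exploit the characterization \eqref{control} to manufacture the $1/B_2$ factor, and finish via $H^2$-regularity and the Sobolev embedding $H^2(\Omega)\hookrightarrow L^\infty(\Omega)$ for $N\le 3$.

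The one tactical difference worth noting is how the $1/B_2$ is extracted from the energy identity $M\|p\|_{H^1}^2\le\int_\Omega hp$. You substitute the pointwise bound $h\le\frac{1}{2B_2}(K+B_1+\lambda K|p|)$ directly, which produces a term $\frac{\lambda K}{2B_2}\|p\|_{L^2}^2$ that must be absorbed --- forcing the large-$B_2$ assumption and a separate (easy) argument for $B_2$ bounded. The paper avoids this case split by a two-pass bootstrap: first it uses only the crude bound $0\le h\le H$ to obtain $\|p\|_{L^2}\le H|\Omega|^{1/2}/M$, \emph{independent of $B_2$}; this feeds into $\|h\|_{L^2}\le M_3/B_2$ via \eqref{control}; then Young's inequality $\int hp\le \frac{1}{4\varepsilon}\|h\|_{L^2}^2+\varepsilon\|p\|_{L^2}^2$ with $\varepsilon=M/2$ yields $\|p\|_{L^2}\le M_4/B_2$ cleanly for \emph{all} $B_2>0$, with no absorption step. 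Both routes are valid; the paper's is slightly more economical in that it delivers a single constant $M'$ without splitting into regimes.
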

\begin{proof}
Taking $u_g=u_h$ in Lemma \ref{newlemma}, we have $\sigma_1(\lambda[-1+\frac{2u_h}{K}+\frac{2cu_h}{(1+u_h^2)^2}+h])>0$. 

For $N\in\{2,3\},$ from \cite[Section 4.27]{Adams2003}, we have $H^2(\Omega)\subset \subset C(\overline{\Omega}).$ So there exists $M_0(\Omega,N)$ such that
\begin{equation}
    \label{L_infinity-H2}
    \left\|p \right\|_{L^\infty(\Omega)} \leq M_0\left\|p \right\|_{H^2(\Omega)}.
\end{equation}
We can use Theorem 3.1.2.3 from  \cite{grisvard2011elliptic} in \eqref{eq:adjoint} since $\Omega$ is $C^2$ (see Remark 3.1.2.4 from \cite{grisvard2011elliptic}) to get 
\begin{equation}\label{H2 bound for p}
\begin{aligned}
     \left\|p \right\|_{H^2(\Omega)} &\leq  M_1(\lambda, \Omega)\left\|-\Delta p+\lambda p \right\|_{L^2(\Omega)} \\
     &=M_1\left\|2\lambda p - \frac{2\lambda u_h p}{K} - \frac{2\lambda c u_h p}{(1+u_h^2)^2} - \lambda hp + h \right\|_{L^2(\Omega)}\\
     &\leq M_2 \left\|p \right\|_{L^2(\Omega)}+M_1\left\|h \right\|_{L^2(\Omega)}. \\
\end{aligned}
\end{equation}
Using \eqref{eig-q1}, we have 
\begin{equation*}
\begin{aligned}
     M\left\|p \right\|_{L^2(\Omega)}^2 &\leq  M\left\|p \right\|_{H^1(\Omega)}^2 \\
&\leq \int_{\Omega}|\nabla p|^2~\rm{d}x+\int_{\Omega}\lambda \left ( -1+  \frac{2u_h}{K}+\frac{2c u_h}{(1+u_h^2)^2}+h \right ) p^2~\rm{d}x +\int_{\partial\Omega}qp^2~\rm{d}s \\
&=\int_{\Omega} hp ~\rm{d}x \\
&\leq \left (\int_{\Omega} h^2 ~\rm{d}x \right )^\frac{1}{2} \left (\int_{\Omega} p^2 ~\rm{d}x \right )^\frac{1}{2} \\
&\leq  H|\Omega|^\frac{1}{2}\left (\int_{\Omega} p^2 ~\rm{d}x \right )^\frac{1}{2}.
\end{aligned}
\end{equation*}
So \begin{equation} \label{L2 bound for p without B2}
    \left\|p \right\|_{L^2(\Omega)} \leq \frac{ H|\Omega|^\frac{1}{2}}{M}.
\end{equation}
We also have 
\begin{equation}\label{L2 bound for h}
    \begin{aligned}
        \left\|h\right\|_{L^2(\Omega)}&=\left\|\frac{u_h-\lambda pu_h-B_1}{2B_2}\right\|_{L^2(\Omega)} \\
&\leq\frac{K}{2B_2}|\Omega|^\frac{1}{2}+\frac{\lambda K}{2B_2}\left\|p\right\|_{L^2(\Omega)}+\frac{B_1}{2B_2}|\Omega|^\frac{1}{2} \\ 
&\leq\frac{K}{2B_2}|\Omega|^\frac{1}{2}+\frac{\lambda K H}{2B_2 M}|\Omega|^\frac{1}{2}+\frac{B_1}{2B_2}|\Omega|^\frac{1}{2} (\text{ using } \eqref{L2 bound for p without B2} )\\ 
&=\frac{M_3}{B_2}, \text{ where } M_3:=\frac{|\Omega|^\frac{1}{2}}{2}  \left (K+ \frac{\lambda K H}{M}+B_1 \right ).
    \end{aligned}
\end{equation}
As in the calculations for \eqref{L2 bound for p without B2}, 
\begin{equation*}
\begin{aligned}
     M\left\|p \right\|_{L^2(\Omega)}^2 &\leq \int_{\Omega} hp ~\rm{d}x \\
    &\leq \frac{1}{4\varepsilon}\int_{\Omega} h^2 ~\rm{d}x  +\varepsilon \int_{\Omega} p^2 ~\rm{d}x \\
    &\leq \frac{1}{4\varepsilon}\left ( \frac{M_3}{B_2} \right )^2+\varepsilon \int_{\Omega} p^2 ~\rm{d}x
\end{aligned}
\end{equation*}
Choosing $\varepsilon = \frac{M}{2},$
\begin{equation}\label{L2 bound for p with B2}
    \left\|p \right\|_{L^2(\Omega)} \leq \frac{M_4}{B2}, \text{ where } M_4:=\frac{M_3}{M}.
\end{equation}
Using \eqref{L2 bound for p with B2} and \eqref{L2 bound for h} in \eqref{H2 bound for p}, 
\begin{equation}
    \left\|p \right\|_{H^2(\Omega)} \leq \frac{M_2 M_4 +M_1 M_3}{B_2}.
\end{equation}
Substituting \eqref{H2 bound for p} in \eqref{L_infinity-H2}, we get 
\begin{equation*}
    \left\|p \right\|_{L^\infty(\Omega)} \leq \frac{M'}{B_2}, \text{ where } M':= M_0(M_2 M_4 +M_1 M_3).
\end{equation*}
\end{proof}
\noindent Next we prove Theorem 1.4 for the uniqueness of the optimality system.\\
\noindent {\bf Proof of Theorem 1.4:}
Suppose $u_h,p,h$ and $u_{\bar{h}},\bar{p},\bar{h}$ are two solutions of \eqref{os}. From the characterization of $h$ and $\bar{h}$ given in \eqref{control} we have 
\begin{align*}
|h-\bar{h}|=\left|\frac{u_h-\lambda pu_h-B_1}{2B_2}-\frac{u_{\bar{h}}-\lambda \bar{p}u_{\bar{h}}-B_1}{2B_2}\right |.
\end{align*}
Rewriting $\lambda pu_h-\lambda \bar{p}u_{\bar{h}}=\lambda p(u_h-u_{\bar{h}})+\lambda (p-\bar{p})u_{\bar{h}}$ we have
\begin{align}\label{U7}
|h-\bar{h}|\leq\frac{1}{2B_2}\left(|(1-\lambda p)(u_h-u_{\bar{h}})|+\lambda u_{\bar{h}}|p-\bar{p}|\right).
\end{align}
Choosing test functions $u_h-u_{\bar{h}}$ in the state equation \eqref{main} and using $hu_h-\bar{h}u_{\bar{h}}=h(u_h-u_{\bar{h}})+(h-\bar{h})u_{\bar{h}}$, gives
\begin{align}\label{U8}
\int_{\Omega}|\nabla(u_h-u_{\bar{h}})|^2~\rm{d}x&+q\int_{\partial\Omega}(u_h-u_{\bar{h}})^2~\rm{d}s+\lambda\int_{\Omega}\left(-1+\frac{1}{K}(u_h+u_{\bar{h}})+c\frac{u_h+u_{\bar{h}}}{(1+u_h^2)(1+u_{\bar{h}}^2)}+h\right)(u_h-u_{\bar{h}})^2~\rm{d}x\\
&=\int_{\Omega}\lambda u_{\bar{h}}(h-\bar{h})(u_h-u_{\bar{h}})~\rm{d}x.\nonumber
\end{align}
 Similarly choosing test function $p-\bar{p}$ in the adjoint equation we have  
 \begin{align*}
 \int_{\Omega}|\nabla (p-\bar{p})|^2~\rm{d}x&+q\int_{\partial\Omega}(p-\bar{p})^2~\rm{d}s\\
 &+\lambda\int_{\Omega}\left[-(p-\bar{p})+\frac{2}{K}(u_hp-u_{\bar{h}}\bar{p})+2c\left(\frac{u_hp}{(1+u_h^2)^2}-\frac{u_{\bar{h}}\bar{p}}{(1+u_{\bar{h}}^2)^2}\right)+hp-\bar{h}\bar{p}\right](p-\bar{p})~\rm{d}x\\
 &=\int_{\Omega}(h-\bar{h})(p-\bar{p})~\rm{d}x.
 \end{align*}
Using $u_hp-u_{\bar{h}}\bar{p}=(u_h-u_{\bar{h}})p+u_{\bar{h}}(p-\bar{p})$,  $hp-\bar{h}\bar{p}=(h-\bar{h})p+\bar{h}(p-\bar{p})$ and adding $\frac{u_{\bar{h}}p}{(1+u_{\bar{h}}^2)^2}$ both side of the above equation we have 
 \begin{align}\label{U9}
 \int_{\Omega}|\nabla (p-\bar{p})|^2~\rm{d}x&+q\int_{\partial\Omega}(p-\bar{p})^2~\rm{d}s+\lambda\int_{\Omega}\left(-1+\frac{2}{K}u_{\bar{h}}+2c\frac{u_{\bar{h}}}{(1+u_{\bar{h}}^2)^2}+\bar{h}\right)(p-\bar{p})^2~\rm{d}x\\\nonumber
 &=\int_{\Omega}(h-\bar{h})(p-\bar{p})~\rm{d}x-\frac{2\lambda}{K}\int_{\Omega}p(u_h-u_{\bar{h}})(p-\bar{p})~\rm{d}x\\
 &-\lambda\int_{\Omega}p(h-\bar{h})(p-\bar{p})~\rm{d}x+2c\lambda\int_{\Omega}p\left(\frac{u_{\bar{h}}}{(1+u_{\bar{h}}^2)^2}-\frac{u_h}{(1+u_h^2)^2}\right)(p-\bar{p})~\rm{d}x\nonumber
 \end{align}
Add \eqref{U8} and \eqref{U9}, to get,
\begin{align*}
&\int_{\Omega}|\nabla(u_h-u_{\bar{h}})|^2~\rm{d}x+q\int_{\partial\Omega}(u_h-u_{\bar{h}})^2~\rm{d}s+\int_{\Omega}\lambda\left(-1+\frac{1}{K}(u_h+u_{\bar{h}})+c\frac{u_h+u_{\bar{h}}}{(1+u_h^2)(1+{u_{\bar{h}}^2})}+h\right)(u_h-u_{\bar{h}})^2~\rm{d}x\\\nonumber
&\int_{\Omega}|\nabla (p-\bar{p})|^2~\rm{d}x+q\int_{\partial\Omega}(p-\bar{p})^2~\rm{d}s+\lambda\int_{\Omega}\left(-1+\frac{2}{K}u_{\bar{h}}+2c\frac{u_{\bar{h}}}{(1+u_{\bar{h}}^2)^2}+\bar{h}\right)(p-\bar{p})^2~\rm{d}x\\\nonumber
&~~~~~=\int_{\Omega}\lambda u_{\bar{h}}(h-\bar{h})(u_h-u_{\bar{h}})~\rm{d}x\\\nonumber
+&\int_{\Omega}(h-\bar{h})(p-\bar{p})~\rm{d}x-\frac{2\lambda}{K}\int_{\Omega}p(u_h-u_{\bar{h}})(p-\bar{p})~\rm{d}x\\\nonumber
 &-\lambda\int_{\Omega}p(h-\bar{h})(p-\bar{p})~\rm{d}x+2c\lambda\int_{\Omega}p\left(\frac{u_{\bar{h}}}{(1+u_{\bar{h}}^2)^2}-\frac{u_h}{(1+u_h^2)^2}\right)(p-\bar{p})~\rm{d}x.\nonumber
\end{align*}
Because $u_h,u_{\bar{h}}>0$, and $u_h,u_{\bar{h}}$ satisfy the state equation \eqref{os}, by Lemma \ref{newlemma} we have
\begin{align*}
\sigma_1\left(\lambda\left(-1+\frac{1}{K}(u_h+u_{\bar{h}})+c\frac{u_h+u_{\bar{h}}}{(1+u_h^2)(1+{u_{\bar{h}}}^2)}+h\right)\right)>0.
\end{align*}
From this we also get
\begin{align*}
\sigma_1\left(\lambda \left(-1+\frac{2}{K}u_{\bar{h}}+2c\frac{u_{\bar{h}}}{(1+u_{\bar{h}}^2)^2}+\bar{h}\right)\right)>0.
\end{align*}
Hence by $\eqref{eig-q1}$ there exists $M>0$, such that 
\begin{align*}
&M \left ( \left\| u_h-u_{\bar{h}} \right\|_{H^1(\Omega)}^2 +\left\| p-\bar{p} \right\|_{H^1(\Omega)}^2\right )\\
%\left(\int_{\Omega}|\nabla(u_h-u_{\bar{h}})|^2~\rm{d}x+\int_{\Omega}|\nabla (p-\bar{p})|^2~\rm{d}x\right)\\
&\leq\int_{\Omega}|\nabla(u_h-u_{\bar{h}})|^2~\rm{d}x+q\int_{\partial\Omega}(u_h-u_{\bar{h}})^2~\rm{d}s+\int_{\Omega}\lambda\left(-1+\frac{1}{K}(u_h+u_{\bar{h}})+c\frac{u_h+u_{\bar{h}}}{(1+u_h^2)(1+u_{\bar{h}}^2)}+h\right)(u_h-u_{\bar{h}})^2~\rm{d}x\\
&+\int_{\Omega}|\nabla (p-\bar{p})|^2~\rm{d}x+q\int_{\partial\Omega}(p-\bar{p})^2~\rm{d}s+\lambda\int_{\Omega}\left(-1+\frac{2}{K}u_{\bar{h}}+2c\frac{u_{\bar{h}}}{(1+u_{\bar{h}}^2)^2}+\bar{h}\right)(p-\bar{p})~\rm{d}x\\
&=\int_{\Omega}\lambda u_{\bar{h}}(h-\bar{h})(u_h-u_{\bar{h}})~\rm{d}x
+\int_{\Omega}(h-\bar{h})(p-\bar{p})~\rm{d}x-\frac{2\lambda}{K}\int_{\Omega}p(u_h-u_{\bar{h}})(p-\bar{p})~\rm{d}x\\
 &-\lambda\int_{\Omega}p(h-\bar{h})(p-\bar{p})~\rm{d}x+2c\lambda\int_{\Omega}p\left(\frac{u_{\bar{h}}}{(1+u_{\bar{h}}^2)^2}-\frac{u_h}{(1+u_h^2)^2}\right)(p-\bar{p})~\rm{d}x\\
 &=\int_{\Omega}\lambda u_{\bar{h}}(h-\bar{h})(u_h-u_{\bar{h}})~\rm{d}x
+\int_{\Omega}(h-\bar{h})(p-\bar{p})~\rm{d}x-\frac{2\lambda}{K}\int_{\Omega}p(u_h-u_{\bar{h}})(p-\bar{p})~\rm{d}x\\
 &-\lambda\int_{\Omega}p(h-\bar{h})(p-\bar{p})~\rm{d}x+2c\lambda\int_{\Omega}p\left(\frac{-1+2u_hu_{\bar{h}}+u_h^3u_{\bar{h}}+u_h^2u_{\bar{h}}^2+u_hu_{\bar{h}}^3}{(1+u_h^2)^2(1+u_{\bar{h}}^2)^2}\right)(u_h-u_{\bar{h}})(p-\bar{p})~\rm{d}x.
 \end{align*}
 Using \eqref{U1}, \eqref{U7}, and $\frac{K(1-H)}{2}\leq u_h,u_{\bar{h}}\leq K$ in the above inequality we have
 \begin{align*}
 &M \left ( \left\| u_h-u_{\bar{h}} \right\|_{H^1(\Omega)}^2 +\left\| p-\bar{p} \right\|_{H^1(\Omega)}^2\right )\\
 %C\left(\int_{\Omega}|\nabla(u_h-u_{\bar{h}})|^2~\rm{d}x+\int_{\Omega}|\nabla (p-\bar{p})|^2~\rm{d}x\right)\\
&\leq \frac{\lambda K\|1-\lambda p\|_{L^{\infty}}}{2B_2}\int_{\Omega}(u_h-u_{\bar{h}})^2~\rm{d}x+\frac{(\lambda K)^2}{2B_2}\int_{\Omega}|u_h-u_{\bar{h}}||p-\bar{p}|~\rm{d}x+\frac{2\lambda M'}{KB_2}\int_{\Omega}|u_h-u_{\bar{h}}||(p-\bar{p})|~\rm{d}x\\
 &+\frac{1+\lambda\|p\|_{L^{\infty}}}{2B_2}\left\{\|1-\lambda p\|_{L^{\infty}}\int_{\Omega}|u_h-u_{\bar{h}}||p-\bar{p}|~\rm{d}x+\int_{\Omega}(p-\bar{p})^2~\rm{d}x\right\}+\frac{2c\lambda M'M_5(K,H)}{B_2}\int_{\Omega}|u_h-u_{\bar{h}}||p-\bar{p}|~\rm{d}x.
\end{align*}
Using the Cauchy's inequality and \eqref{U1}, we have 
\begin{align*}
&M \left ( \left\| u_h-u_{\bar{h}} \right\|_{H^1(\Omega)}^2 +\left\| p-\bar{p} \right\|_{H^1(\Omega)}^2\right )\\
&\leq \frac{M_6}{B_2}\left ( \left\| u_h-u_{\bar{h}} \right\|_{H^1(\Omega)}^2 +\left\| p-\bar{p} \right\|_{H^1(\Omega)}^2\right ).
\end{align*}
This leads to a contradiction if we choose $B_2$ sufficiently large. Thus we conclude that the optimal control is unique for large $B_2$.

%%%%%%%%%%%%%%%%%%%%%%%%%%%%%%%%%%%%%%%

\bibliography{OptimalRef.bib}
\bibliographystyle{abbrv}
\end{document}